\title[Ergodicity of $\alpha$-stable processes]
{Ergodicity of infinite white $\alpha$-stable Systems with linear and bounded interactions}
\author[L.Xu]{Lihu Xu}
\address{PO Box 513, EURANDOM, 5600 MB  Eindhoven. The Netherlands}
\email{xu@eurandom.tue.nl}
\date{}
 \newtheorem{thm}{Theorem}[section]
 \newtheorem{lem}[thm]{Lemma}
 \newtheorem{prop}[thm]{Proposition}
 \newtheorem{assumption}[thm]{Assumption}
 \theoremstyle{definition}
 \theoremstyle{remark}
 \newtheorem{rem}[thm]{Remark}
 \numberwithin{equation}{section}
 \newcommand{\R}{\mathbb{R}}
 \newcommand{\G}{\mathcal{L}}
 \newcommand{\p}{\partial}
 \newcommand{\D}{\mathcal{D}}
 \newcommand{\e}{\varepsilon}
 \newcommand{\Z}{\mathbb{Z}}
 \newcommand{\E}{\mathbb{E}}
 \newcommand{\N}{\mathbb{N}}
 \newcommand{\mcl}{\mathcal}
\begin{document}
\maketitle
%
%\pagebreak
%\vspace{5cm}
\begin{abstract} \label{abstract}
We proved the existence of an infinite dimensional stochastic system driven by
white $\alpha$-stable noises ($1<\alpha \leq 2$), and prove this system is strongly mixing.
Our method is by perturbing Ornstein-Uhlenbeck $\alpha$-stable processes.
 \\

\noindent {\it Key words and phrases.} Ergodicity, Ornstein-Uhlenbeck
$\alpha$-stable processes, spin systems, finite speed of propagation of interactions. \\
\noindent {\it 2000 Mathematics Subject Classification.} 37L55, 60H10, 60H15.
\end{abstract}
\section{Introduction}\label{Introduction}
We shall study an infinite dimensional spin system with
linear and bounded interactions, driven by (white) $\alpha$-stable
noises. More precisely, our system is described by the following
SDEs: for every $i \in \mathbb{Z}^d$,
\begin{equation} \label{e:IntSys}
\begin{cases}
dX_i(t)=[\sum_{j \in \Z^d} a_{ij} X_j(t)+U_i(X(t))]dt+dZ_i(t) \\
X_i(0)=x_i
\end{cases}
\end{equation}
where $X_i, x_i \in \R$, $\{Z_i; i \in \Z^d\}$ are a sequence of i.i.d.
standard symmetric $\alpha$-stable
process with $1<\alpha \leq 2$, and the assumptions for the $a$ and $U$ are specified in Assumption
\ref{a:AU}.
\ \\

When $Z(t)$ is Wiener noise, the equation \eqref{e:IntSys} has been intensively studied
to model some phenomena in physics such as quantum spin systems
since the 90s of last century
(\cite{AKYR95}, \cite{AKYT94}, \cite{DPZ96}, \cite{PeZa07},
\cite{AMR09},\cite{PeZa06}, \cite{PrZa09}, \cite{PrZa09-2}, $\cdots$).
The other motivation to study \eqref{e:IntSys} is from the work by Zegarlinski
on interacting unbounded spin systems driven by Wiener noise (\cite{Ze96}).
In that paper, the author proved the following uniform ergodicity
$||P_tf-\mu(f)||_{\infty} \leq C(f) e^{-m t}$, where
$P_t$ is the semigroup generated by a reversible generator and $\mu$
is the ergodic measure of $P_t$. This type of ergodicity is
very strong, and obtained by a logarithmic Sobolev inequality (LSI). Unfortunately,
the LSI is not available in our set-up, however, we can find a \emph{gradient} decay estimates,
which is crucial in the proof of ergodic theorem. We should point out that our ergodicity result
is strongly mixing type, which is much weaker than the uniform one.
\ \\

The approach of this paper is via perturbing the Ornstein-Uhlenbeck
$\alpha$-stable process, this needs one to know some exact formula
of this process. Comparing with the above perturbation approach,
the main tools in \cite{XZ09-1}
are some iterations under the framework of probability, one
only uses the $\alpha$-stable property
and the moments of the stable processes.
Hence, we can think that \cite{XZ09-1} is some \emph{qualitative} analysis,
while this paper is some \emph{quantitative} one.
\\

The organization of the paper is as follows. The introduction includes notations, main
results and some
preliminary about Ornstein-Uhlenbeck $\alpha$-stable processes, the second and third sections
prove the existence and ergodicity results respectively. The short appendix gives a simple but
interesting derivation of \eqref{e:ProRepU}.
\subsection{Notations, Assumptions and Main Results}
We shall study the system \eqref{e:IntSys} on $\mathbb{B} \subset \mathbb{R}^{\mathbb{Z}^d}$,
which is defined by $$\mathbb{B}=\bigcup \limits_{R>0, \rho>0}B_{R,\rho}$$
where for any $R,\rho>0$
 $$\ B_{R,\rho}=\{x=(x_i)_{i \in \Z^d};
 |x_i| \leq R (|i|^{\rho}+1)\} \ \ {\rm with} \ \ |i|=\sum \limits_{k=1}^d |i_k|.$$
We shall see that given any initial data $x=(x_i)_{i \in \Z^d} \in \mathbb B$,
the dynamics $X(t)$ defined
in \eqref{e:IntSys} evolves in $\mathbb B$ almost surely.
\begin{rem} \label{r:RemB}
One can also check that the distributions of the standard white $\alpha$-stable
processes $(Z_i(t))_{i \in \Z^d}$ ($0<\alpha \leq 2$) at any fixed time $t$
are supported on $\mathbb B$.
From the form of the equation \eqref{e:IntSys}, one can expect that the distributions
of the system at any fixed time $t$ is similar to those of $\alpha$-stable processes
but with some (complicated) shifts. Hence, it is \emph{natural} to study \eqref{e:IntSys} on $\mathbb B$.
\end{rem}

Let us first list some notations which will be frequently used in the paper
and then give the detailed assumption on $a$ and $U$.
\begin{itemize}
\item Define $|i-j|=\sum
_{1 \leq k \leq d}|i_k-j_k|$ for any $i,j \in \Z^d$,
define $|\Lambda|=\sharp \Lambda$ for any finite sublattice $\Lambda \subset \subset \Z^d$.
\item For the national simplicity, we shall write
$\partial_i:=\partial_{x_i}$, $\p_{ij}:=\p^2_{x_i x_j}$
and $\partial^{\alpha}_i:=\partial^{\alpha}_{x_i}$. It is easy to see that
$[\p^{\alpha}_i, \p_j]=0$ for all $i,j \in \Z^d$.
\item For any finite sublattice $\Lambda \subset \subset \mathbb{Z}^d$,
let $C_b(\R^{\Lambda},\R)$ be the bounded continuous
function space from $\R^{\Lambda}$ to $\R$, denote
$\mathcal{D}=\bigcup_{\Lambda \subset \subset
\mathbb{Z}^d} C_b(\R^{\Lambda},\R)$ and
$$\mathcal{D}^{k}=\{f \in
\mathcal{D}; f \ {\rm has} \  {\rm bounded} \ 0, \cdots,\ kth \ {\rm order} \ {\rm derivatives}\}.$$
\item For any $f \in \mathcal{D}$, denote $\Lambda(f)$ the
localization set of $f$, i.e. $\Lambda(f)$ is the smallest set
$\Lambda \subset \mathbb{Z}^d$ such that $f \in C_b(\R^{\Lambda},\R)$.
\item For any $f \in C_b(\mathbb B,\R)$, define $||f||=\sup_{x \in \mathbb B} |f(x)|$.
For any $f \in \mcl D^1$, define $|\nabla f(x)|^2=\sum_{i \in \Z^d} |\p_i f(x)|^2$.
\item For any $f \in \mcl D^1$, define $|\nabla f(x)|^2=\sum_{i \in \Z^d} |\p_i f(x)|^2$.
\item $||\cdot||$ is the uniform norm, i.e. for any $f \in C_b(\mathbb B, \R)$,
$||f||=\sup_{x \in \mathbb B} |f(x)$. The seminorms $|||\cdot|||_1$ and $|||\cdot|||_2$
are respectively defined by
$$|||f|||_1=\sum \limits_{i \in \mathbb{Z}^d}||\partial_i f|| \ \ \ \ \ \ \ f \in \mathcal{D}^{1},$$
and
$$|||f|||_2=\sum \limits_{j,k \in \mathbb{Z}^d}||\partial_{jk} f|| \ \ \ \ \ \ \
f \in \mathcal{D}^{2}.$$
\item $\mcl B_b(H,\R)$ is the function space including the bounded measurable functions from some
topological space $H$ to $\R$,
\end{itemize}

\begin{assumption} [Assumptions of $a$ and $U$] \label{a:AU}
The $a$ and $U$ in \eqref{e:IntSys} satisfies the following conditions:
\begin{enumerate}
\item \emph{(Linear interactions)} $a_{ij} \geq 0$ for all $i \neq j$, $a_{ii}=-1$ for all $i \in \Z^d$.
\item \emph{(Bounded interactions)} $U_i \in \mcl D^2$ for all $i \in \Z^d$,
$\sup\limits_{i \in \Z^d} ||U_i||<\infty$.
\item  \emph{(Finite range property)} There exists some $K \in \N$ such that,
for all $i,j  \in \Z^d$ with $|i-j|>K$, one has $a_{ij}=0$ and $\p_j U_i(x)=0$ for all $x \in \mathbb B$.
\item $\eta<\infty$ with $\eta:=\sup \limits_{j \in \Z^d}
\left(\sum \limits_{i \in \Z^d, i \neq j} a_{ij}+|||U_j|||_1\right)$,
and $\sup \limits_{j \in \Z^d} |||U_j|||_2<\infty$.
\end{enumerate}
\end{assumption}
\ \\

The main results of this paper are the following two theorems
\begin{thm} \label{t:ConDyn}
There exists a Markov semigroup $P_t$ on the space $\mcl B_b(\mathbb B, \R)$
generated by the system \eqref{e:IntSys}.
\end{thm}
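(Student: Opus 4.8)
The plan is to construct the dynamics by a finite-volume approximation combined with a fixed-point/perturbation argument around the Ornstein--Uhlenbeck $\alpha$-stable process. First I would fix a finite sublattice $\Lambda \subset\subset \Z^d$ and consider the truncated system on $\R^\Lambda$, where $X_j$ with $j\notin\Lambda$ is frozen (say set to zero, or to the initial datum $x_j$): this is a finite-dimensional SDE driven by i.i.d. $\alpha$-stable noises with a linear drift given by the matrix $(a_{ij})_{i,j\in\Lambda}$ and a bounded, $C^2$ perturbation $U_i$. Because $a_{ii}=-1$ and the off-diagonal entries are controlled by $\eta<\infty$, the linear part generates an OU $\alpha$-stable semigroup; adding the bounded Lipschitz-type perturbation $U$ (Assumption \ref{a:AU}(2),(4) give $\|U_i\|<\infty$ and $|||U_i|||_1<\infty$, hence Lipschitz in the relevant coordinates) yields a unique strong solution $X^\Lambda(t)$ by a standard Picard iteration on the mild formulation $X^\Lambda_i(t)=\sum_j e^{ta}_{ij}x_j+\int_0^t e^{(t-s)a}_{ij}U_j(X^\Lambda(s))\,ds+\eiot$, where $\eiot$ is the OU-stable stochastic convolution $\int_0^t e^{(t-s)a}\,dZ(s)$. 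This gives a Feller–Markov semigroup $P^\Lambda_t$ on $C_b(\R^\Lambda,\R)$.

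Next I would pass to the infinite-volume limit $\Lambda\uparrow\Z^d$. The key is an a priori estimate showing the solutions stay in $\mathbb B$ and depend continuously on the truncation. Here the \emph{finite range property} (Assumption \ref{a:AU}(3)) and the finiteness of $\eta$ are essential: the interaction $\sum_j a_{ij}X_j+U_i(X)$ involves only sites within distance $K$ of $i$, so a Gronwall-type argument along the mild formulation — using $\|e^{ta}\|$ bounded via $\eta$, and the linear growth of the initial data in $B_{R,\rho}$ — propagates at finite speed and controls $|X^\Lambda_i(t)-X^{\Lambda'}_i(t)|$ uniformly on bounded time intervals for large enough $\Lambda,\Lambda'$ relative to $i$. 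Combined with the fact (Remark \ref{r:RemB}) that the stochastic convolutions $\eiot$ live in $\mathbb B$ a.s., this yields a Cauchy sequence, hence a limit process $X(t)$ taking values in $\mathbb B$ a.s., solving \eqref{e:IntSys}. One then checks the limit is independent of the choice of approximating sequence and defines $P_tf(x)=\E[f(X(t))]$ for $f\in\mcl B_b(\mathbb B,\R)$; the Markov property passes to the limit from the finite-volume Markov property, and boundedness of $P_tf$ is immediate.

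The main obstacle I anticipate is the a priori control of moments and the finite-speed-of-propagation estimate in the $\alpha$-stable setting: unlike the Gaussian case, $Z_i(t)$ has only moments of order $<\alpha$, so one cannot run the usual $L^2$ Gronwall argument. I would instead work with $\E[|X_i(t)|^p]$ for some $p<\alpha$ (or with truncated/stopped versions of the process), exploiting that the nonlinearity $U$ is \emph{bounded} to absorb its contribution cheaply, and that the linear part is dissipative ($a_{ii}=-1$) so that the matrix exponential $e^{ta}$ has entries decaying suitably in $|i-j|$ — this is where one needs a careful estimate, presumably the "gradient decay estimate" alluded to in the introduction, to sum over $j$ against the polynomial weight $|i|^\rho+1$ defining $B_{R,\rho}$. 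Once this weighted, time-local bound is in place, the convergence of the finite-volume approximations and the verification that $X(t)\in\mathbb B$ follow by routine manipulations, and the semigroup and Markov properties are then a formality.
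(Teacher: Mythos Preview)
Your overall architecture---finite-volume truncation, solve the finite-dimensional SDE, then pass to the infinite-volume limit---matches the paper's. The implementation, however, is genuinely different. The paper never works pathwise with $X^\Lambda(t)$: instead it works entirely at the level of the semigroups $P^N_t f$, proving that $\{P^N_t f(x)\}_N$ is Cauchy for each $f\in\D^\infty$ and $x\in\mathbb B$ by writing $P^M_t f-P^N_t f=\int_0^t P^M_{t-s}(\mathcal L_M-\mathcal L_N)P^N_s f\,ds$ and controlling the right-hand side via commutator/gradient bounds $\|\partial_k P^N_t f\|$ (Lemma~\ref{l:FinSpePro}) together with a Du~Hamel expansion around the \emph{product} OU semigroup $S_t$ of \eqref{e:DefSt} (only the diagonal $a_{ii}=-1$ is peeled off; the off-diagonal $a_{ij}$ are treated as perturbation). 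The finite speed of propagation is thus a statement about derivatives of $P^N_t f$, not about trajectories, and the moment bound \eqref{e:1MomPNt} is obtained from the explicit kernel \eqref{e:ProRepU} rather than from a pathwise $L^p$ Gronwall.

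Your pathwise route---mild formulation, $L^p$ ($p<\alpha$) Gronwall, Cauchy sequence for $X^\Lambda_i(t)$---is viable and is essentially the ``probabilistic'' approach the introduction attributes to \cite{XZ09-1}; it is more direct and yields an actual limiting process. The paper's semigroup approach, by contrast, produces the gradient estimates $\|\partial_k P^N_t f\|$ as a by-product, and these are exactly what is reused in the ergodicity proof (Lemma~\ref{l:GradientDecayL2} and Proposition~\ref{p:ErgAt0}). One small mismatch in your write-up: the ``gradient decay estimate'' you invoke from the introduction is a semigroup-level bound on $|\nabla P^N_t f|$, not a decay estimate for entries of $e^{ta}$; it will not directly help you sum $\sum_j |e^{ta}_{ij}|(1+|j|)^\rho$ in a pathwise argument. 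For that you would instead iterate the mild equation and use the finite-range property to get a combinatorial bound of the form $\sum_n t^n(a+c+\delta)^n_{ij}/n!$, which is the analogue of \eqref{e:PNtIte} at the process level.
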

\begin{thm} \label{t:ErgThm}
We have some constant $c>0$ such that as $\eta<c$,
there exists a probability measure $\mu$ supported on $\mathbb B$
so that for all $x \in \mathbb B$,
$$\lim_{t \rightarrow \infty} P^*_t \delta_x = \mu \ \ \ {\rm weakly}.$$
\end{thm}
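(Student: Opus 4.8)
The plan is to establish ergodicity by perturbation of the Ornstein--Uhlenbeck $\alpha$-stable process, following the standard two-part strategy: (i) produce an invariant measure as a weak limit of time-averages (or show tightness of $\{P_t^*\delta_x\}_{t\ge 0}$ directly), and (ii) prove uniqueness-plus-convergence via a gradient (Lipschitz) contraction estimate on $P_t$ that holds when $\eta$ is sufficiently small. I would run this on the core of test functions $\mcl D^1$ and then extend by density.

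\smallskip

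\emph{Step 1: gradient decay estimate.} The heart of the argument is an estimate of the form $|||P_t f|||_1 \le C e^{-mt}\,|||f|||_1$ for all $f\in\mcl D^1$, valid when $\eta<c$ for a suitable constant. To get this I would differentiate the Kolmogorov equation: writing $u(t,x)=P_tf(x)$, the function $v_i=\p_i u$ satisfies a linearized equation whose drift contains the generator of the OU-$\alpha$-stable part plus the interaction terms $\sum_j(a_{ij}+\p_j U_i)(x)\,v_j$. Using the exact formula / semigroup representation for the decoupled OU $\alpha$-stable process (the one producing the formula \eqref{e:ProRepU} referenced in the appendix) together with a Duhamel/variation-of-constants expansion, one bounds $\sum_i\|v_i(t)\|$ by a Gronwall-type inequality $\frac{d}{dt}|||P_tf|||_1 \le (-1+\eta)\,|||P_tf|||_1$, because the diagonal term $a_{ii}=-1$ gives exponential decay $e^{-t}$ and the off-diagonal plus $U$ contributions are controlled by $\eta$ via Assumption \ref{a:AU}(4). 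The finite-range property \ref{a:AU}(3) guarantees all the sums are locally finite so the manipulations are legitimate; the second-derivative bound in \ref{a:AU}(4) is what lets one justify differentiating under $P_t$ and control the error terms in the perturbation series. Hence $m=1-\eta>0$ when $\eta<1$, and actually one needs $\eta<c$ with $c\le 1$ to also close the existence/regularity estimates.

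\smallskip

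\emph{Step 2: Cauchy-in-time and the limit measure.} With the gradient estimate in hand, for $f\in\mcl D^1$ and $s<t$ one writes $P_tf-P_sf = P_s(P_{t-s}f-f)$ and controls $\|P_sg\|$-type differences by integrating the gradient bound against the increments of the dynamics; more precisely one shows $|P_tf(x)-P_sf(x)|\le |||f|||_1\cdot(\text{something}\to 0)$ uniformly on compact subsets $B_{R,\rho}$ of $\mathbb B$, using that the OU-$\alpha$-stable process has a stationary law and that the drift-increment over $[s,t-s]$ has finite first moment (here $\alpha>1$ is essential). This makes $\{P_t^*\delta_x\}$ a Cauchy net in the topology of weak convergence tested against $\mcl D^1$, which is measure-determining on $\mathbb B$; together with tightness (inherited from the $\alpha$-stable marginals being supported on $\mathbb B$, cf.\ Remark \ref{r:RemB}) this yields a limit $\mu$. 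Independence of $\mu$ from $x$ follows because $|P_tf(x)-P_tf(y)|\le |||P_tf|||_1\,\|x-y\|_{\ell^\infty\text{-weighted}} \le Ce^{-mt}|||f|||_1\|x-y\|\to 0$ on each $B_{R,\rho}$. Invariance of $\mu$ is then automatic from the semigroup property and the convergence.

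\smallskip

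\emph{Main obstacle.} The delicate point is Step 1: justifying the Duhamel expansion and the interchange of $\p_i$ with $P_t$ in the infinite-dimensional, non-local ($\alpha$-stable generator involves $\p_i^\alpha$) setting, and showing the perturbation series in the interaction converges with the stated rate. One must check that the OU-$\alpha$-stable semigroup genuinely maps $\mcl D^1\to\mcl D^1$ with the right quantitative control on $|||\cdot|||_1$ (and uses $|||\cdot|||_2$ to bootstrap), and that the finite-speed-of-propagation mentioned in the keywords lets one approximate by finite-volume dynamics $X^{(\Lambda)}$ for which everything is classical, then pass to the limit. A secondary technical nuisance is extending the weak convergence from $\mcl D^1$ to all bounded continuous functions on $\mathbb B$, which requires the tightness/uniform-integrability input so that $\mcl D^1$-testing suffices; this is where Remark \ref{r:RemB} and the explicit tail bounds for $\alpha$-stable marginals do the work.
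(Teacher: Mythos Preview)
Your overall two–part strategy (gradient decay, then convergence) is the right shape, but both halves diverge from the paper and the second half has a genuine gap.

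\textbf{Step 1.} The claimed inequality $\frac{d}{dt}|||P_tf|||_1\le(-1+\eta)|||P_tf|||_1$ is not what the commutator computation gives: once you pass to sup-norms the diagonal $a_{kk}=-1$ becomes $+1$, and indeed the paper's Lemma~\ref{l:FinSpePro} obtains the \emph{growth} bound $|||P^N_tf|||_1\le e^{(1+\eta)t}|||f|||_1$ by exactly this route. The paper's gradient decay (Lemma~\ref{l:GradientDecayL2}) is instead the pointwise $L^{2m}$ estimate $|\nabla P^N_tf|^{2m}\le e^{-2m\beta t}P^N_t|\nabla f|^{2m}$, obtained by differentiating $s\mapsto P_s|\nabla P_{t-s}f|^{2m}$ and checking that a certain quadratic form is positive definite when $\eta$ is small. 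This $L^{2m}$ form (with H\"older) is what allows one to absorb the unbounded factor $\sum_{j\ne i}a_{ij}x_j$ inside the Duhamel integrand, which a pure $|||\cdot|||_1$ bound does not handle.

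\textbf{Step 2: the gap.} Writing $P_tf-P_sf=P_s(P_{t-s}f-f)$ does not produce a Cauchy estimate: gradient decay controls $P_rf(x)-P_rf(y)$, not $P_rf-f$, so this is circular. Your fallback is tightness of $\{P^*_t\delta_x\}$ plus gradient decay for uniqueness; that would work in principle, but you give no mechanism for tightness on $\mathbb B$, and the only available moment bound (the paper's \eqref{e:1MomPNt}) \emph{grows} like $e^{\rho d(1+\eta)t}$. The paper avoids tightness altogether: it proves directly that $t\mapsto P_tf(0)$ is Cauchy (Proposition~\ref{p:ErgAt0}) by expanding $P^N_t$ around $S_t$ via Duhamel, then (i) using the $L^{2m}$ gradient decay to kill the tail $\int_L^{t}\cdots$, and (ii) using first \emph{and second order} finite-speed-of-propagation (Lemmas~\ref{l:FinSpePro} and~\ref{l:TriBar2}) together with the refined OU convergence estimate Lemma~\ref{l:ConStf} to control the remaining $\int_0^L\cdots$. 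The second-order estimate and the delicate decoupling in Lemma~\ref{l:ConStf} (which trades dependence on $|\Lambda(f)|$ for a tail of $\sum_i\|\partial_i f\|$) are the pieces your sketch does not anticipate; they are precisely where $|||U_j|||_2<\infty$ from Assumption~\ref{a:AU}(4) is used in the ergodicity proof. Only after the limit $\ell(f)=\lim_t P_tf(0)$ is established does the paper use gradient decay, in the way you describe, to show $P_tf(x)\to\ell(f)$ for all $x$.
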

\begin{rem}
The convergence in Theorem \ref{t:ErgThm} implies that $P_t$ is strongly mixing (see \cite{DPZ96}).
\end{rem}
%%%%%%%%%%%%%%%%%%%%%%%%%%%%%%%%%%%%%%%%%%%%%%%%%%%%%%%%%%%%%%%%%%%%%%%%%%%%%%%%%%%%%%%%%%%%%%%%%%%%%%
%%%%%%%%%%%%%%%%%%%%%%%%%%%%%%%%%%%%%%%%%%%%%%%%%%%%%%%%%%%%%%%%%%%%%%%%%%%%%%%%%%%%%%%%%%%%%%%
\subsection{Ornstein-Uhlenbeck $\alpha$-stable Processes}
\label{Section2}
\subsubsection{One dimensional Ornstein-Uhlenbeck $\alpha$-stable process}
This process is described by the
following stochastic differential equation (SDE)
\begin{equation} \label{e:OUa}
\begin{cases}
dX(t)=-X(t)dt+dZ(t) \\
X(0)=x
\end{cases}
\end{equation}
where $Z(t)$ is a symmetric $\alpha$-stable process ($0<\alpha \leq 2$) with
infinitesimal generator $\partial^{\alpha}_x$ defined by
\begin{equation} \label{e:fraclap}
\partial^{\alpha}_x f(x)=\frac{1}{C_{\alpha}} \int_{\mathbb{R} \setminus \{0\}} \frac{f(y+x)-f(x)}{|y|^{\alpha+1}}dy, \ \ C_{\alpha}=
-\int_{\mathbb{R} \setminus \{0\}} (cosy-1)\frac{dy}{|y|^{1+\alpha}}.
\end{equation}
as $0<\alpha <2$, and by $\frac{1}{2}\Delta$ as $\alpha=2$ (\cite{ARW00}). Moreover, if $f$ has
Fourier transform $\hat{f}$, then
$$\partial^{\alpha}_x f(x)=\frac{1}{\sqrt{2 \pi}} \int_{\mathbb{R}} |\lambda|^{\alpha}
\hat{f}(\lambda)e^{i \lambda x}d \lambda.$$
\\

The Kolmogorov backward equation of (\ref{e:OUa}) is
\begin{equation} \label{e:kola}
\begin{cases}
\partial_t u=\partial^{\alpha}_x u-x\partial_x u, \\
u(0)=f,
\end{cases}
\end{equation}
which is solved by
\begin{equation} \label{e:ProRepU}
u(t,x)=\E_x[f(X(t))]=\int_{\mathbb R} p\left(\frac{1-e^{-\alpha t}}{\alpha};e^{-t}x,y\right) f(y)dy,
\end{equation}
where $X(t)$ is the solution to \eqref{e:OUa} and
\begin{equation} \label{e:AlpStaTra}
p(t;x,y)=\frac{1}{\sqrt{2 \pi}}\int_{{\mathbb R}} \frac{1}{\sqrt{2
\pi}} e^{-t|\lambda|^{\alpha}+i(x-y) \lambda} d\lambda.
\end{equation}
One can refer to the Appendix for a formal derivation of \eqref{e:ProRepU} and
\eqref{e:AlpStaTra}, or refer to \cite{XZ09} for the rigorous one. From \eqref{e:ProRepU}, one can easily see that as $t \rightarrow \infty$
\begin{equation} \label{e:Erg1d}
u(t,x) \rightarrow \int_{\R} p(1/\alpha;0,y)f(y)dy.
\end{equation}
Hence, the law of $X(t,x)$ weakly convergence to the measure $p(1/\alpha;0,y)dy$, which is
independent of the initial data $x$. Hence, $X(t,x)$ is ergodic and strongly mixing (\cite{DPZ96}).
\ \\
%%%%%%%%%%%%%%%%%%%%%%%%%%%%%%%%%%%%%%%%%%%%%%%%%%%%%

Define $S^0_t f(x):=\E_x[f(X_t)],$
which is the Ornstein-Uhlenbeck $\alpha$-stable semigroup generated by
the operator $\partial^{\alpha}_x-x\partial_x$.
\begin{lem} \label{l:StXBeta}
Let $1 \leq \beta<\alpha$, then
\begin{equation} \label{e:PolEst}
S^0_t[|x|^{\beta}](0) \leq C(\beta).
\end{equation}
\end{lem}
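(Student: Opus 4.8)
The plan is to read the estimate directly off the explicit solution formula \eqref{e:ProRepU}. Taking $f(x)=|x|^{\beta}$ and evaluating at $x=0$, and abbreviating $s=s(t):=\frac{1-e^{-\alpha t}}{\alpha}$, formula \eqref{e:ProRepU} gives
\[
S^0_t[|x|^{\beta}](0)=\int_{\R}p\big(s;0,y\big)\,|y|^{\beta}\,dy .
\]
Since $1-e^{-\alpha t}\in[0,1)$ for every $t\ge 0$, we always have $0\le s\le 1/\alpha$, so it suffices to bound $\int_{\R}p(s;0,y)\,|y|^{\beta}\,dy$ by a constant depending only on $\beta$ and $\alpha$, uniformly for $s\in(0,1/\alpha]$.

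Next I would identify the kernel $p(s;0,\cdot)$ from \eqref{e:AlpStaTra}: it is the probability density of a symmetric $\alpha$-stable random variable $Y_s$ with characteristic function $\E[e^{i\lambda Y_s}]=e^{-s|\lambda|^{\alpha}}$. By the self-similarity of stable distributions, $Y_s$ has the same law as $s^{1/\alpha}Y_1$, where $Y_1$ has characteristic function $e^{-|\lambda|^{\alpha}}$; hence
\[
\int_{\R}p(s;0,y)\,|y|^{\beta}\,dy=\E\big[|Y_s|^{\beta}\big]=s^{\beta/\alpha}\,\E\big[|Y_1|^{\beta}\big]\le \alpha^{-\beta/\alpha}\,\E\big[|Y_1|^{\beta}\big].
\]
So the lemma follows with $C(\beta):=\alpha^{-\beta/\alpha}\,\E[|Y_1|^{\beta}]$ once we know that $\E[|Y_1|^{\beta}]<\infty$ for $1\le\beta<\alpha$.

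Finally, for the finiteness of this moment I would invoke the classical fact that a non-degenerate symmetric $\alpha$-stable law with $0<\alpha<2$ has tails of order $\alpha$, hence finite absolute moments of every order $\beta<\alpha$ (and for $\alpha=2$ the law is Gaussian, so all moments are finite). If a self-contained argument is preferred, write $\E[|Y_1|^{\beta}]=\beta\int_0^{\infty}r^{\beta-1}\,\mathbb{P}(|Y_1|>r)\,dr$ and use the standard tail bound $\mathbb{P}(|Y_1|>r)\le\min\{1,\,c\,r^{-\alpha}\}$, equivalently the density estimate $p(1;0,y)\le c\,(1+|y|)^{-1-\alpha}$, which one extracts from \eqref{e:AlpStaTra} by splitting the $\lambda$-integral and integrating by parts; the integral then converges at $0$ because $\beta>0$ and at $\infty$ because $\beta<\alpha$. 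This tail (or density) estimate for stable laws is the only non-routine ingredient — everything else is the scaling identity together with the bound $s(t)\le1/\alpha$, which is exactly what delivers the uniformity in $t$.
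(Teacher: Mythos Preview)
Your proof is correct and follows essentially the same approach as the paper: both start from the explicit representation \eqref{e:ProRepU}, use the scaling identity $\int_{\R}p(s;0,y)|y|^{\beta}dy=s^{\beta/\alpha}\int_{\R}p(1;0,y)|y|^{\beta}dy$, bound $s=(1-e^{-\alpha t})/\alpha$ uniformly in $t$, and invoke the finiteness of the $\beta$-th moment of a symmetric $\alpha$-stable law for $\beta<\alpha$. Your write-up is somewhat more explicit about the moment bound and the scaling, but the argument is the same.
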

\begin{proof}
Since the symmetric $\alpha$-stable process ($1<\alpha \leq 2$) has $\beta$ order
moments with $1 \leq \beta<\alpha$, by \eqref{e:ProRepU} and \eqref{e:AlpStaTra}, we have
\begin{equation*}
\begin{split}
S^0_t[|x|^\beta](0)&=\int_{\R}
p\left(\frac{1-e^{-\alpha t}}{\alpha};0, y\right)
|y|^{\beta}dy\\
&=\left [\frac{1-e^{-\alpha t}}{\alpha}
\right]^{\beta/\alpha} \int_\R p(1;0,y)|y|^\beta dy \leq C(\beta)
\end{split}
\end{equation*}
\end{proof}
\subsubsection{Infinite dimensional Ornstein-Uhlenbeck $\alpha$-stable processes}
Consider the white symmetric $\alpha$-stable
noises $(Z_i(t))_{i \in \Z^d}$, i.e. $(Z_i(t))_{i \in \Z^d}$
are i.i.d. symmetric $\alpha$-stable processes, and define the infinite
dimensional Ornstein-Uhlenbeck $\alpha$-stable processes by the following SDEs: for all $i \in \Z^d$
$$dX_i(t)=-X_i(t)dt+dZ_i(t).$$
Clearly, $X_i(t)$ at each $i \in Z^d$ is an Ornstein-Uhlenbeck $\alpha$-stable process,
which is independent of the processes on the other sites.
By \eqref{e:Erg1d},  $(X_i(t))_{i \in Z^d}$ is ergodic and has
a unique invariant measure $\left(p(1/\alpha;0,y_i)dy_i\right)_{i \in \Z^d}$
with $p$ defined by \eqref{e:AlpStaTra}.\\

Moreover, it is easy to see that the infinitesimal generator of $(X_i(t))_{i \in \Z^d}$ is given by
\begin{equation} \label{e:OrnUhlPro}
\mcl L:=\sum_{i \in \Z^d} (\partial_i^{\alpha}-x_i\partial_i),
\end{equation}
which is well defined on $\mathcal D^\infty$. Clearly, $\mcl L$ generates
a Markov semigroup $S_t$ on $\mcl D$, which is the product of one dimensional Ornstein-Uhlenbeck $\alpha$-stable semigroup, more precisely,
for any function $f \in \mcl D$,
$S_tf$ is defined by
\begin{equation} \label{e:DefSt}
S_t f(x)=\int_{\R^{\Lambda(f)}} \prod_{j \in \Lambda(f)} p\left(\frac{1-e^{-\alpha t}}{\alpha};e^{-t}x_j,y_j\right) f(y) \prod_{j \in \Lambda(f)} dy_j
\end{equation}
\ \\
\section{Existence of Infinite Dimensional Interacting $\alpha$-stable Systems}

\subsection{Galerkin approximation of the interacting systems}
Let $\Gamma_N=[-N,N]^d$ be the cube of ${\mathbb Z}^d$.
We approximate the infinite dimensional system by
\begin{equation} \label{e:GalApp}
\begin{cases}
d X^N_i(t)=[\sum_{j \in \Gamma_N} a_{ij}
X^N_j(t)+U^N_{i}(X^N(t))]dt+dZ_i(t), \\
X^N_i(0)=x_i,
\end{cases}
\end{equation}
for all $i \in \Gamma_N$,
where $U^N_{i}(x^N)=U_i(x^N,0)$ with $x^N=(x_i)_{i \in \Gamma_N}$.
Since the coefficients in \eqref{e:GalApp} are Lipschitz, by \cite{Bi02} (chapter 5),
\eqref{e:GalApp} has a unique global strong solution. Moreover, for any
differentiable $f$, $P^N_tf$ is also differentiable (c.f. chapter 5 of \cite{Bi02}).
\\

For any $f \in \mcl D^\infty$ with
$\Lambda(f) \subset \Gamma_N$, define
$$P^N_tf(x)=E_x^N [f(X^N(t))],$$
then $P^N_t$ is a Markov semigroup, moreover, it satisfies
\begin{equation} \label{e:kolmogorov}
\begin{cases}
\partial_t u(t)=\mathcal{L}_N u(t)\\
u(0)=f
\end{cases}
\end{equation}
where $\mathcal{L}_N$ is the generator of $P^N_t$ defined by
\begin{align}
\mathcal{L}_N&=\sum_{i \in \Gamma_N} \p^{\alpha}_i+
\sum_{i \in \Gamma_N}[\sum_{j \in \Gamma_N} a_{ij}x_j+U^N_{i}(x)] \p_i \\
&=\sum_{i \in \Gamma_N} [\p^{\alpha}_i-x_i \p_i]+
\sum_{i \in \Gamma_N}[\sum_{j \in \Gamma_N \setminus i } a_{ij}x_j+U^N_{i}(x)] \p_i. \label{e:LNOrnUhl}
\end{align}
\ \\
From the form \eqref{e:LNOrnUhl} of $\mathcal{L}_N$, by Du Hamel principle, we have
\begin{equation} \label{e:DuHam}
P^N_tf=S_t f+\int_0^t S_{t-s} \left \{\sum_{i \in \Gamma_N}[\sum_{j \in \Gamma_N \setminus i } a_{ij}x_j+U^N_{i}(x)] \p_i P^N_s f \right\}ds
\end{equation}
where $S_t$ is defined by \eqref{e:DefSt}.

\subsection{Auxiliary Lemmas}
The following relation \eqref{e:expfinite} is usually called finite
speed propagation of interactions (\cite{GuZe03}),
which roughly means that the effects of the initial condition (i.e.
$f$ in our case) need a long time
to be propagated (by interactions) far away. The main reason for this phenomenon is that
the interactions are finite range.
\begin{lem}  \label{l:FinSpePro}
\ \\
\noindent 1. For any $f \in \D^\infty$, we have
\begin{equation} \label{e:TriBar}
|||P^N_tf|||_1 \leq e^{(\eta+1)t} |||f|||_1.
\end{equation}
\noindent 2. \emph{(Finite speed of propagation of the interactions)}
Given any $f \in \D^{1}$ and $k \notin \Lambda(f)$, for any $A>0$, there exists
some $B \geq 1$ such that when $n_k>Bt$, we have
\begin{equation} \label{e:expfinite}
||\partial_k P^N_t f||\leq e^{-At-An_k} |||f|||_1
\end{equation}
where $n_k=[\frac{dist(k,\Lambda(f))}{K}]$ and $K \in \N$ is defined in Assumption \ref{a:AU}.
\end{lem}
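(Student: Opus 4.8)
The plan is to prove both parts by differentiating the Du Hamel formula \eqref{e:DuHam} and running a Gronwall-type estimate on the quantities $\|\partial_k P^N_t f\|$, summed over $k$ for part 1 and tracked site-by-site for part 2. First I would apply $\partial_k$ to both sides of \eqref{e:DuHam}. Since $S_t$ acts as a product of one-dimensional Ornstein–Uhlenbeck $\alpha$-stable semigroups and $[\partial^\alpha_i,\partial_j]=0$, one has the clean commutation relation $\partial_k S_t g = e^{-t} S_t \partial_k g$, which contributes the factor that eventually produces the "$+1$" (really the lack of a worse constant) in the exponent of \eqref{e:TriBar}. Applying $\partial_k$ to the integral term and using the product/chain rule on $[\sum_{j\neq i} a_{ij}x_j + U^N_i(x)]\partial_i P^N_s f$ produces three kinds of terms: (i) the coefficient times $\partial_i \partial_k P^N_s f$; (ii) $a_{ik}\,\partial_i P^N_s f$ from differentiating the linear part; and (iii) $\partial_k U^N_i(x)\,\partial_i P^N_s f$ from differentiating the bounded interaction. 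Here I would use that $\|x_j\|$ factors are handled by the $e^{-t}$-contraction of $S_t$ acting on them before they blow up — more precisely, one never actually needs to bound $x_j$ pointwise because $\partial_k$ removes the $x$-dependence in the relevant direction, leaving only $a_{ij}$, $\partial_j U^N_i$, and second derivatives of $P^N_s f$; the coefficients in front are controlled by $\eta$ via Assumption \ref{a:AU}(4).

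For part 1, set $\phi(t) = |||P^N_t f|||_1 = \sum_k \|\partial_k P^N_t f\|$. Summing the differentiated Du Hamel identity over $k$, using $\|S_{t-s}h\|\le\|h\|$ (contraction in the uniform norm), and collecting the terms of types (i)–(iii), the cross-derivative terms $\partial_i\partial_k P^N_s f$ reassemble — after swapping the order of summation over $i,k$ and using $\sum_{i} a_{ij} + |||U_j|||_1 \le \eta$ — into $\eta\,\phi(s)$, while the type-(i) coefficient $\sum_j a_{ij} x_j$ contributes via the same mechanism. This yields $\phi(t) \le e^{-t}\phi(0) + \int_0^t (\eta+1)\phi(s)\,ds$ or a similar inequality, and Gronwall gives $\phi(t)\le e^{(\eta+1)t}\phi(0)$, which is \eqref{e:TriBar}.

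For part 2, fix $k\notin\Lambda(f)$ and let $\psi_k(t) = \|\partial_k P^N_t f\|$. The point is that in the differentiated Du Hamel identity for $\partial_k P^N_t f$, the inhomogeneous term involving $a_{ik}\partial_i P^N_s f$ and $\partial_k U^N_i\,\partial_i P^N_s f$ forces $i$ to lie within distance $K$ of $k$ (finite range), and the cross term forces a new derivative at some site $i$ with $|i-k|\le K$; so the "influence" of $\partial_k$ can only reach $\Lambda(f)$ by a chain of at least $n_k = [\mathrm{dist}(k,\Lambda(f))/K]$ such hops, and until it does, $\partial_k P^N_s f$ sees only sites disjoint from $\Lambda(f)$. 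Iterating the Du Hamel bound $n_k$ times (equivalently, induction on $n_k$), each hop costs a time integral $\int_0^t(\cdots)ds$ contributing a factor like $(Ct)^{n_k}/n_k!$, together with the overall growth $e^{(\eta+1)t}$ from part 1 bounding $|||P^N_s f|||_1$. Choosing $B$ large enough that when $n_k > Bt$ the factorial decay $(Ct)^{n_k}/n_k!$ beats $e^{(\eta+1)t}$ and produces the required $e^{-At-An_k}$ gives \eqref{e:expfinite}.

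The main obstacle I expect is the bookkeeping in part 2: making the "at least $n_k$ hops" argument quantitative. One has to set up the right induction hypothesis — something of the form $\|\partial_k P^N_t f\| \le (C t)^{m}/m!\cdot e^{(\eta+1)t}|||f|||_1$ valid whenever $\mathrm{dist}(k,\Lambda(f)) > mK$ — and verify that one application of the differentiated Du Hamel formula advances $m$ by one while the finite-range structure guarantees all surviving terms still satisfy the distance hypothesis with $m-1$. Getting the constants $A$, $B$, $C$ to line up (in particular absorbing the $e^{(\eta+1)t}$ and the combinatorial factor from summing over the $O(K^d)$ neighbours at each hop into the stated clean bound) is the delicate part; everything else is a routine Gronwall argument.
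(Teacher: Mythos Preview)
There is a genuine gap in your approach for part~1 (and it propagates to part~2). When you differentiate the Du~Hamel formula \eqref{e:DuHam} by $\partial_k$, your type-(i) term is
\[
\int_0^t e^{-(t-s)} S_{t-s}\Bigl\{\sum_{i\in\Gamma_N}\bigl[\textstyle\sum_{j\neq i} a_{ij}x_j+U^N_i(x)\bigr]\,\partial_k\partial_i P^N_s f\Bigr\}\,ds,
\]
and the claim that ``$\partial_k$ removes the $x$-dependence in the relevant direction'' is not correct: $\partial_k$ only kills the single summand $j=k$, while the remaining linear coefficient $\sum_{j\neq i,k} a_{ij}x_j$ is still unbounded. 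Consequently $\|S_{t-s}h\|\le\|h\|$ is useless here because $\|h\|=\infty$, and even if you track $S_{t-s}[x_j g]$ more carefully you pick up a term $e^{-(t-s)}x_j\,S_{t-s}g$ which is again unbounded in $x$. Moreover, these cross terms involve the \emph{second}-order quantities $\|\partial_{ki}P^N_s f\|$, so your Gronwall system does not close on $\phi(s)=|||P^N_s f|||_1$; at best you would need simultaneous control of $|||P^N_s f|||_2$, which is not available at this stage.

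The paper avoids this obstacle by interpolating with the \emph{full} interacting semigroup rather than the Ornstein--Uhlenbeck one: from
\[
\frac{d}{ds}\,P^N_{t-s}\partial_k P^N_s f \;=\; P^N_{t-s}\,[\partial_k,\mathcal L_N]\,P^N_s f
\]
one sees that the commutator $[\partial_k,\mathcal L_N]=\sum_i(a_{ik}+\partial_kU_i)\partial_i$ is a \emph{bounded} first-order operator (the unbounded drift $x_j\partial_i$ commutes with $\partial_k$ up to the constant $a_{ik}\partial_i$). Taking uniform norms and using $\|P^N_{t-s}g\|\le\|g\|$ immediately gives the closed integral inequality
\[
\|\partial_k P^N_t f\|\le\|\partial_k f\|+\int_0^t\sum_i(|a_{ik}|+\|\partial_kU_i\|)\,\|\partial_i P^N_s f\|\,ds,
\]
from which part~1 follows by summing over $k$ and Gronwall, and part~2 follows by iterating this single inequality (the finite-range structure forces the first $n_k$ iterates to vanish, yielding the $t^{n_k}/n_k!$ factor you described). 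No second derivatives or unbounded coefficients ever enter.
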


\begin{proof}
For the notational simplicty, we drop the index $N$ of the quantities if no confusions arise. By Markov property of $P_t$ and the easy fact
$$\frac{d}{ds}P_{t-s}\partial_k P_s
f=P_{t-s}[\partial_k, \mathcal{L}_N] P_s f$$
where
$[\partial_k,\mathcal{L}_N]=\p_k \mathcal{L}_N-\G_N
\partial_k=\sum_{i \in \Gamma_N} [a_{ik}+\p_k U_i]\p_i$, we have
\begin{equation} \label{e:PNtForIte}
\begin{split}
||\partial_k P_tf|| \leq ||\partial_k f||+\int_{0}^{t} \sum_{i \in
\Gamma_N} (|a_{ik}|+||\partial_k U_i||) ||\partial_i P_s
f||ds.
\end{split}
\end{equation}
To prove \eqref{e:TriBar}, summing the index $k$ of the above inequality over $\Z^d$,
by Assumption \ref{a:AU}, we have
\begin{equation*}
|||P_tf|||_1 \leq |||f|||_1+(1+\eta) \int_0^t |||P_sf|||_1ds,
\end{equation*}
which immediately implies \eqref{e:TriBar}.
\\

Now let us show \eqref{e:expfinite}. Denote $c_{ik}=||\partial_k U_{i}||$ and $\delta_{ik}$
Krockner's function, we have
by iterating \eqref{e:PNtForIte}
\begin{equation*}
\begin{split}
& ||\partial_k P_tf|| \leq \sum_{n=0}^{\infty} \frac{t^n}{n!} \sum_{i \in \Lambda(f)} [(c+a+\delta)^n]_{ik} ||\partial_i f|| \\
&=\sum_{n=0}^{n_k} \frac{t^n}{n!} \sum_{i \in \Lambda(f)} [(c+a+\delta)^n]_{ik}
||\partial_i f||+\sum_{n=n_k+1}^{\infty} \frac{t^n}{n!} \sum_{i \in \Lambda(f)} [(c+a+\delta)^n]_{ik} ||\partial_i f||
\end{split}
\end{equation*}
The first term of the last line is zero,
since $(\delta+a+c)_{ik}=0$ for all $n \leq n_k$ by the definition of $n_k$.
As for the second term, we can easily have
$$\sum_{n=n_k+1}^{\infty} \frac{t^n}{n!} \sum_{i \in \Lambda(f)}
[(c+a+\delta)^n]_{ik} ||\partial_i f|| \leq \frac{t^{n_k}}{n_k!} e^{(1+\eta)t} |||f|||_1.$$
Hence,
\begin{equation} \label{e:PNtIte}
\begin{split}
||\partial_k P^N_tf|| \leq \frac{t^{n_k}}{n_k!} e^{(1+\eta)t} |||f|||_1
\end{split}
\end{equation}
For any $A>0$, choosing $B \geq 1$ such
that
$$2-logB+log(1+\eta)+\frac{1+\eta}{B} \leq -2A,$$
as $n>Bt$, one has
\begin{equation*} %\label{e:expfinite1}
\begin{split}
& \ \ \frac{t^{n} (1+\eta)^{n}}{n!} e^{(1+\eta)t}
\leq \exp\{n \log \frac{1+\eta}{B}+2n+(1+\eta)\frac{n}{B}\} \\
& \leq
\exp\{-2An\} \leq \exp\{-An-At\}.
\end{split}
\end{equation*}
Replacing $n$ by $n_k$, we conclude the proof.
\end{proof}
%%%%%%%%%%%%%%%%%%%%%%%%%%%%%%%%%%%%%%%%
The following lemma roughly means that
if the initial data is in a ball $B_{R, \rho}$, then
the dynamics will not go far from this ball
in finite time. Note that the following $P^N_t f_k(x)$
equals to $\E_x[|X^N_k(t)|]$.
\begin{lem}
Let $f_k(y)=|y_k|$ with $k \in \Gamma_N$, then for all $x \in B_{R,\rho}$
\begin{equation} \label{e:1MomPNt}
P^N_t f_k(x) \leq C (1+|k|)^{\rho} e^{\rho d(1+\eta)t}.  \ \ \
\end{equation}
where $C=C(\rho,R, \eta, d, K)$.
\end{lem}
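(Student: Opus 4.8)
The plan is to run the same Duhamel/iteration scheme used in Lemma \ref{l:FinSpePro}, but now tracking the growth of the function $f_k(y)=|y_k|$ rather than its gradient. Starting from the Duhamel formula \eqref{e:DuHam} (or equivalently from the Kolmogorov equation \eqref{e:kolmogorov} written in the Ornstein--Uhlenbeck form \eqref{e:LNOrnUhl}), I would write
$$P^N_t f_k(x) = S_t f_k(x) + \int_0^t S_{t-s}\Big\{\sum_{i \in \Gamma_N}\big[\textstyle\sum_{j \in \Gamma_N \setminus i} a_{ij}x_j + U^N_i(x)\big]\partial_i P^N_s f_k\Big\}(x)\, ds.$$
The first term is handled by Lemma \ref{l:StXBeta}: since $S_t$ is a product of one-dimensional OU $\alpha$-stable semigroups and $f_k$ depends only on the coordinate $y_k$, we get $S_t f_k(x) = S^0_t[|y_k|](e^{-t}x_k)$, and by the computation in Lemma \ref{l:StXBeta} (with $\beta=1$, valid because $1<\alpha\le 2$) this is bounded by $e^{-t}|x_k| + C \le C(1+|k|)^\rho$ for $x \in B_{R,\rho}$, using $|x_k|\le R(|k|^\rho+1)$. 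For the integral term, I would use the boundedness of $U_i$ from Assumption \ref{a:AU}(2) and the gradient bound \eqref{e:TriBar} from Lemma \ref{l:FinSpePro} to control $\sum_i \|\partial_i U^N_i\| \, |\partial_i P^N_s f_k|$-type contributions, while the linear part $\sum_j a_{ij}x_j$ is what produces the genuine growth in $|x|$ and hence the factor $(1+|k|)^\rho$ and the exponential in time.

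More concretely, I would set $g(t,x) := P^N_t f_k(x)$ and derive a differential inequality for a suitable weighted sup-norm. Because of the finite range property (Assumption \ref{a:AU}(3)), the site $k$ can only be influenced in one time step by sites within distance $K$; iterating, after time $t$ the relevant sites lie within distance $\sim dt$-ish of $k$ (this is where the factor $\rho d$ in the exponent comes from — crossing $\Z^d$ one coordinate at a time). The key estimate is that $\sum_{j}(a_{kj} + \|\partial_j U_k\|) (1+|j|)^\rho \le (1+\eta)(1+|k|)^\rho \cdot C(K,\rho,d)$, since $|j|\le |k|+K$ when $a_{kj}\neq 0$ or $\partial_j U_k \neq 0$, and $(1+|k|+K)^\rho \le C(K,\rho)(1+|k|)^\rho$. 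Feeding this into the Duhamel iteration and summing the resulting series (just as in the proof of \eqref{e:PNtIte}) yields a bound of the form $C(1+|k|)^\rho \sum_n \frac{(C'(1+\eta)t)^n}{n!} = C(1+|k|)^\rho e^{C'(1+\eta)t}$, and one checks the constant $C'$ can be taken to be $\rho d$ after the appropriate bookkeeping of how many sites enter at each iteration level.

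The main obstacle I anticipate is bookkeeping the $(1+|k|)^\rho$ weight through the iteration cleanly. Unlike the gradient estimate, where summing over $\Z^d$ gave the clean constant $\eta$, here one integrates against the unbounded initial datum $|x_k|$ and must propagate the polynomial weight $(1+|\cdot|)^\rho$ along interaction paths; the inequality $(1+|i|)^\rho \le C(K,\rho,d)(1+|k|)^\rho$ for $|i-k|\le nK$ degrades like $(1+nK)^\rho$ in $n$, and one must verify this sub-exponential degradation is absorbed by the $1/n!$ from the Duhamel expansion — which it is, but it requires care to land exactly on the stated exponent $\rho d(1+\eta)t$ rather than something slightly larger. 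I would also need to confirm that $P^N_t f_k$ is well-defined and differentiable despite $f_k$ being unbounded; this follows because on the truncated cube $\Gamma_N$ the coefficients are Lipschitz and the SDE \eqref{e:GalApp} has first moments (again using $1<\alpha\le 2$), so $\E^N_x[|X^N_k(t)|]<\infty$ and the formal manipulations are justified.
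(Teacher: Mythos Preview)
Your proposal is correct and shares the paper's overall architecture: one application of Duhamel \eqref{e:DuHam}, the bound $S_t f_k(x)\le C+e^{-t}|x_k|\le C(1+|k|)^\rho$ via Lemma~\ref{l:StXBeta}, and control of the $U_i$-part through $\sup_i\|U_i\|$ together with \eqref{e:TriBar} (note $|||f_k|||_1=1$). The one substantive difference lies in how the linear drift term $\int_0^t S_{t-s}\bigl[\sum_i\sum_{j\neq i}a_{ij}y_j\,\partial_i P^N_s f_k\bigr](x)\,ds$ is handled. You propose to expand $\|\partial_i P^N_s f_k\|$ as the series $\sum_n\frac{s^n}{n!}[(a+c+\delta)^n]_{ki}$ and sum the weight $(1+|i|)^\rho$ against it directly, absorbing the $(1+nK)^\rho$ degradation into $1/n!$. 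The paper instead invokes the already-packaged finite-speed estimate \eqref{e:expfinite}: after bounding $S_{t-s}[|y_j|](x)\le C+|x_j|$, it splits the $i$-sum into $\{|i-k|>Bs\}$, where $\|\partial_i P^N_s f_k\|\le e^{-An_i-As}$ beats the polynomial growth of $|x_j|\le R(1+|j|)^\rho$, and $\{|i-k|\le Bs\}$, where $(1+|j|)^\rho\le C(1+|k|+Bs)^\rho$ and $\sum_i\|\partial_i P^N_s f_k\|\le e^{(1+\eta)s}$ by \eqref{e:TriBar}. The two routes are equivalent in spirit, since \eqref{e:expfinite} is itself derived from exactly that series; the paper's version is just tidier because it cites the lemma rather than redoing the combinatorics inline. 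Your worry about hitting the precise exponent $\rho d(1+\eta)$ is unnecessary: the paper merely absorbs a polynomial prefactor $(1+s)^{\rho d}$ into a crude exponential, and any finite exponential rate suffices for the application in Theorem~\ref{t:ConDyn}.
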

\begin{proof}
For the notional simplicity, we shall drop the index $N$ of the quantities if no confusions arise.
By \eqref{e:DuHam}, we have
\begin{equation*}
P_tf_k(x)=S_t f_k(x)+\int_0^t S_{t-s} [\ \sum_{i \in \Gamma_N}
(\sum_{i \in \Gamma_N \setminus i} a_{ij} y_j+U_{i}(y)) \partial_i P_s
f_k \ ](x) ds.
\end{equation*}
By \eqref{e:DefSt}, \eqref{e:PolEst}, and that of $B_{R, \rho}$,
one has
\begin{equation} \label{e:Stfk}
\begin{split}
|S_t f_k(x)| & \leq S_t[|y_k-e^{-t} x_k|+e^{-t} |x_k|] \\
&=\int_{\R} p \left(\frac{1-e^{-\alpha t}}{\alpha},0,y_k \right) |y_k| dy_k+e^{-t}|x_k| \\
& \leq C+R(1+|k|)^\rho.
\end{split}
\end{equation}
By \eqref{e:TriBar} and the easy fact $|||f_k|||_1=1$, we have
\begin{equation*}
%\begin{split}
\left|\int_0^t S_{t-s} \left[\sum_{i \in \Gamma_N} U_{i}(y)
\partial_i P_s f_k\right]ds\right| \leq \sup_i ||U_{i}||\int_0^t |||P^N_s f_k|||_1 ds \\
\leq C e^{(\eta+1)t}.
%\end{split}
\end{equation*}
Moreover, by the same argument as in \eqref{e:Stfk},
\begin{equation} \label{e:IntSt-sEst}
\begin{split} & \ \ \left|\int_0^t S_{t-s} \left[\sum_{i \in \Gamma_N}
\sum_{j \in \Gamma_N
\setminus i} a_{ij} y_j \partial_i P_s f_k \right](x) ds \right| \\
& \leq \int_0^t  \sum_{i \in \Gamma_N} ||\partial_i P_s f_k|| \sum_{j \in \Gamma_N \setminus
i}a_{ij} S_{t-s} \left[|y_j-e^{-(t-s)}x_j|+e^{-(t-s)}|x_j| \right](x)
ds\\
& \leq C \int_0^t \sum_{i \in \Gamma_N} \sum_{j \in \Gamma_N \setminus
i}a_{ij}||\p_i P_s f_k|| ds+\int_0^t \sum_{i \in \Gamma_N} \sum_{j \in \Gamma_N \setminus i}a_{ij}
||\partial_i P^N_s f_k|| |x_j| ds.\\
\end{split}
\end{equation}
For the first term in the last line, by Assumption \ref{a:AU} (in particular, $a_{ij} \leq 1+\eta$ and
$a_{ij}=0$ if $|i-j|>K$) and \eqref{e:TriBar},
one can easily have
\begin{equation} \label{e:StaParFirMom}
\sum_{i \in \Gamma_N} \sum_{j \in \Gamma_N \setminus
i}a_{ij}||\p_i P_s f_k|| \leq K^d (1+\eta) |||P_s f_k|||_1 \leq K^d(1+\eta) e^{(1+\eta)s}
\end{equation}
As for the second term, let us first estimate the double summations therein '$\sum_{i \in \Gamma_N} \sum_{j \in \Gamma_N \setminus i} \cdots$', the idea is to split the first sum '$\sum_{i \in \Gamma_N}$' into two
pieces '$\sum_{i:|i-k|>Bs}$', '$\sum_{i:|i-k|\leq Bs}$' and control them
 by \eqref{e:expfinite} and \eqref{e:TriBar}. More
 precisely, for any $A>0$, let $B>1$ be chosen as in Lemma \ref{l:FinSpePro},
 by Assumption \ref{a:AU} (in particular, $a_{ij}=0$ for $|i-j|>K$), and the fact
 $|x_j| \leq R(1+|j|)^\rho$,
 the piece '$\sum_{i:|i-k|>Bs}$'can be estimated by
\begin{equation*}
\begin{split}
\sum_{i:|i-k|>Bs} ||\partial_i P_s f_k|| & \sum_{j \in \Gamma_N \setminus
i} a_{ij} |x_j| \leq \sum_{i \in \Gamma_N} e^{-A|i-k|/K-As} R \left(1+|i|+K^d\right)^\rho  \\
& \leq C(K,d,\rho,R, \eta)  \sum_{i \in \Gamma_N} e^{-A|i-k|/K-As}[(1+|k|)^{\rho}+|k-i|^{\rho}] \\
 & \leq C(K,d,\rho,R, \eta,A)(1+|k|)^\rho.
\end{split}
\end{equation*}
As for the other piece, by Assumption \ref{a:AU} again, we have
\begin{equation*}
\begin{split}
 \sum_{i:|i-k| \leq Bs} ||\partial_i P_s f_k|| \sum_{j \in \Gamma_N \setminus
i}& a_{ij} |x_j| \leq \sum_{i:|i-k| \leq Bs}||\p_i P_s f_k|| K^d (1+\eta)R(1+|i|+K^d)^{\rho} \\
& \leq |||P_s f_k|||_1 K^d (1+\eta) R\left(1+|k|+(Bs)^d+K^d\right)^{\rho}   \\
 & \leq C(R, K,d,\eta,B,\rho) s^{\rho d} e^{(1+\eta)s} (1+|k|)^\rho
\end{split}
\end{equation*}
Combining the above two inequalities, one immediately has
$$\sum \limits_{i \in \Gamma_N} \sum_{j \in \Gamma_N \setminus i} a_{ij} ||\partial_i P^N_s f_k||
|x_j| \leq C(R,d,K,\rho,\eta) (1+s)^{\rho d}e^{(1+\eta)s}(1+|k|)^\rho.$$
Hence, plugging the above estimates and \eqref{e:StaParFirMom} into \eqref{e:IntSt-sEst}, we have
\begin{equation*}
\left|\int_0^t S_{t-s} \left[\sum_{i \in \Gamma_N}
\sum_{j \in \Gamma_N
\setminus i} a_{ij} y_j \partial_i P_s f_k \right](x) ds \right| \leq C(d,\eta, R,\rho,K) e^{\rho d (1+\eta)t} (1+|k|)^{\rho}
\end{equation*}
\end{proof}
%%%%%%%%%%%%%%%%%%%%%%%%%%%%%%%%%%%%%%%%
\subsection{Proof of Theorem \ref{t:ConDyn}}
\begin{proof}
We shall firstly prove that $\lim \limits_{N \rightarrow \infty} P^N_t f(x)$ exists for
any $f \in \mcl D^\infty$, $t>0$ and $x \in \mathbb B$. It suffices to show that $\{P^N_tf(x)\}_N$ is a
Cauchy sequence pointwisely in one $B_{R, \rho}$. \\

Take $x \in B_{R,\rho}$, for any $\Gamma_M \supset \Gamma_N
\supset \Lambda(f)$ with $M>N$, we have
\begin{equation} \label{e:PMN}
\begin{split}
|P^M_tf(x)& -P^N_tf(x)|\leq \left|\int_{0}^{t}
\frac{d}{ds}P^M_{t-s}\left[P^N_sf\right](x) ds \right| \\
& \leq \left|\int_{0}^{t}P^M_{t-s}
\left[(\mathcal{L}_M-\mathcal{L}_N)P^N_sf\right](x) ds \right| \\
& \leq \left|\int_{0}^{t}P^M_{t-s} [\ \sum_{i \in \Gamma_M \setminus
\Gamma_N} (\sum_{j \in \Gamma_M} a_{ij} y_j+U^M_{i}(y))
\partial_i P^N_sf \ ](x) ds \right| \\
& \ \ +\left|\int_{0}^{t}P^M_{t-s} [\ \sum_{i \in \Gamma_N}\left(U^M_{i}(y)-U^N_i(y)\right)
\partial_i P^N_sf\ ](x) ds \right|.
\end{split}
\end{equation}
By \eqref{e:1MomPNt}, (3) of Assumption \ref{a:AU} and \eqref{e:expfinite},
\begin{equation} \label{e:LinCon}
\begin{split}
& \ \ P^M_{t-s}[\sum_{i \in \Gamma_M \setminus \Gamma_N}\sum_{j \in \Gamma_M \setminus i}
a_{ij} |y_j| \cdot ||\partial_i P^N_sf||](x) \\
& \leq \sum_{i \in \Gamma_M \setminus \Gamma_N} ||\partial_i P^N_sf||\sum_{j \in \Gamma_M} a_{ij} P^N_{t-s}[|y_j|](x) \\
& \leq C(t-s,R,d,\rho,\eta)\sum_{i \in \Gamma_M \setminus \Gamma_N} K^d (1+\eta)
(1+|i|+K^{d})^{\rho}||\partial_i P^N_sf|| \\
& \rightarrow 0 \ \ \ (M, N \rightarrow \infty).
\end{split}
\end{equation}
Similarly, by \eqref{e:expfinite} again, as $M, N \rightarrow \infty$,
\begin{equation*} \label{e:BouCon}
\sum_{i \in \Gamma_M \setminus \Gamma_N} |P^M_{t-s}[\ U^M_{i}(y)
\partial_i P^N_sf\ ](x)| \leq \sup_i ||U_{i}||\sum_{i \in \Gamma_M \setminus \Gamma_N}  ||\p_i P^N_s f||
\rightarrow 0.
\end{equation*}
By (3) of Assumption
\ref{a:AU}, the definition of
$U^N_i$ and \eqref{e:expfinite}, the term $'|\cdots|'$ in the last line of \eqref{e:PMN}
can be bounded by
 \begin{equation}
\begin{split}
|\cdots|&=
\left|\int_{0}^{t}P^M_{t-s} [\
\sum_{i \in \p_K(\Gamma_N)} \left(U^M_i(y)-U^N_i(y) \right)
\partial_i P^N_sf\ ](x) ds \right| \\
& \leq  2 \sup_{i} ||U_i|| \int_{0}^{t}
\sum_{i \in \p_K(\Gamma_N)}
||\p_i P^N_sf|| ds  \\
& \rightarrow 0 \ \ \ \ (N \rightarrow \infty)
\end{split}
\end{equation}
where $\p_K(\Gamma_N)=\{i \in \Gamma_N; dist(i,\p \Gamma_N) \leq K\}$
and $\p \Gamma_N$ is the boundary of $\Gamma_N$. \\

Injecting all the above inequalities into \eqref{e:PMN},
we conclude that $\{P^N_t f(x)\}_N$ is a Cauchy sequence for any $t>0$, $x \in B_{\rho,R}$
and $f \in D^{\infty}$. \\

For any $f \in \D^\infty$ and $x \in \mathbb B$, denote
\begin{equation} \label{e:DefPtf}
P_t f(x):=\lim_{N \rightarrow \infty} P^N_t f(x).
\end{equation}
Since $\mcl D^\infty$ is dense in $\mcl B_b(\mathbb B, \R)$ (under the product topology), we can extend
the domain of $P_t$ from $\mcl D^\infty$ to $\mcl B_b(\mathbb B, \R)$.
Thanks to \eqref{e:TriBar} and similar arguments as above, one can pass to
the limit on the both sides of $P^N_{t_1+t_2} f(x)=P^{N}_{t_1} P^N_{t_2}f(x)$
and obtain the semigroup property of $P_t$, i.e. $P_{t_1+t_2} f(x)=P_{t_1}P_{t_2} f(x)$.
It is easy to see that $P_t ({\bf 1})(x)=1$ for all $x \in \mathbb B$ and $P_t f(x) \geq 0$.
Hence $P_t$ is a Markov semigroup on $\mcl B_b(\mathbb B,\R)$.
\end{proof}

\section{Ergodic Theorem \ref{t:ErgThm}}  \label{s:ErgThm}
\subsection{Auxiliary Lemmas} To prove the ergodicity result,
we need the following three auxiliary lemmas.
\\

Let $S_t$ be the product semigroup defined in \eqref{e:DefSt}, for any $f \in \D^\infty$,
by replacing $\Lambda_0$ in \eqref{e:AsySt} by $\Lambda(f)$, one can easily have
\begin{equation*}
\begin{split}
& \ \ |S_{t_2}f(0)-S_{t_1}f(0)| \\
& \leq |\Lambda(f)| \cdot ||f||
\int_{\R}\left|p \left(\frac{1-e^{-\alpha t_2}}{\alpha},0,y \right)-p
\left(\frac{1-e^{-\alpha t_1}}{\alpha},0,y\right)\right| dy \\
& \rightarrow 0 \ \ \ (t_1,t_2 \rightarrow \infty).
\end{split}
\end{equation*}
 However, the above convergence speed depends on $|\Lambda(f)|$, the size of $f$. This type of convergence
 is not enough to control some
limit in the interacting system. Alternatively, we shall use the information of $f \in \mathcal{D}^1$,
i.e.
$|||f|||_1<\infty$,
and prove
the convergence speed \emph{asymptotically} depends on $\Lambda(f)$, this will make
some room to uniformly control the second term on the r.h.s. of \eqref{e:St2sSt1s}.
More precisely, we have
\begin{lem} \label{l:ConStf}
Let $S_t$ be the product semigroup generated by \eqref{e:OrnUhlPro}.
Given any $f \in \mathcal{D}^\infty$ and $\Lambda_0 \subset \Lambda(f)$,
for any $t_2,t_1 \geq 0$, we have
\begin{equation*} \label{e:DelErgS}
\begin{split}
|S_{t_2}f(0)-S_{t_1}f(0)| & \leq |\Lambda_0| \cdot ||f|| \int_{\R} |p \left(\frac{1-e^{-\alpha t_2}}{\alpha},0,y \right)-p \left(\frac{1-e^{-\alpha t_1}}{\alpha},0,y\right)|dy\\
& \ \ +C
\sum_{i \in \Lambda(f) \setminus \Lambda_0} ||\p_if||.
\end{split}
\end{equation*}
where $C>0$ is some constant only depending on the parameter $\alpha$.
\end{lem}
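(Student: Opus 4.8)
The plan is to split the product structure of $S_t$ along the decomposition $\Lambda(f) = \Lambda_0 \cup (\Lambda(f)\setminus\Lambda_0)$, freeze the coordinates outside $\Lambda_0$, and use a telescoping argument to replace the evolution on the outside coordinates by their invariant limit, paying a cost controlled by $\sum_{i\in\Lambda(f)\setminus\Lambda_0}\|\partial_i f\|$. Concretely, for a finite set $\Lambda$ write $S_t^{\Lambda}$ for the product semigroup acting only on the variables indexed by $\Lambda$, so that $S_t f = S_t^{\Lambda_0} S_t^{\Lambda(f)\setminus\Lambda_0} f$ (the two factors commute since they act on disjoint blocks of coordinates). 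The key auxiliary estimate I would first establish is: for a single site $i$,
\begin{equation*}
\left| S_t^{\{i\}} f(x) - \int_{\R} p(1/\alpha;0,y_i)\, f(x|_{x_i \to y_i})\, dy_i \right| \leq C \|\partial_i f\|,
\end{equation*}
uniformly in $t\geq 0$ and in the other coordinates. This follows from \eqref{e:DefSt} applied to the single factor: writing the one-dimensional kernel as $p\big(\tfrac{1-e^{-\alpha t}}{\alpha};e^{-t}x_i,y_i\big)$ and using that it is a probability density, one subtracts off $f$ evaluated at a reference point obtained by integrating against $p(1/\alpha;0,\cdot)$ and bounds the difference by $\|\partial_i f\|$ times the expected displacement; the expected displacement is finite because the relevant $\alpha$-stable laws have finite first moment (this is exactly the content of Lemma \ref{l:StXBeta} with $\beta=1$), which is where the constant $C=C(\alpha)$ comes from. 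Here the assumption $1<\alpha\leq 2$ is essential — for $\alpha\leq 1$ the first moment diverges and the argument breaks.

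Next I would iterate this over the sites of $\Lambda(f)\setminus\Lambda_0$ by a standard telescoping sum: enumerate $\Lambda(f)\setminus\Lambda_0 = \{i_1,\dots,i_m\}$, and replace $S_t^{\{i_\ell\}}$ by the invariant-measure integral one site at a time. At each step the error is bounded by $C\|\partial_{i_\ell} g_\ell\|$ where $g_\ell$ is the partially-averaged function; since averaging against a probability measure does not increase $\|\partial_{i_\ell}(\cdot)\|$ (differentiation in $x_{i_\ell}$ commutes with integration against $p(1/\alpha;0,y_j)dy_j$ for $j\neq i_\ell$, and for $j=i_\ell$ the derivative is simply killed), each error term is in fact at most $C\|\partial_{i_\ell} f\|$. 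Summing, we get
\begin{equation*}
\left| S_t f(0) - S_t^{\Lambda_0}\Big[\textstyle\int \prod_{j\in\Lambda(f)\setminus\Lambda_0} p(1/\alpha;0,y_j)\, f\, dy_j\Big](0)\right| \leq C \sum_{i\in\Lambda(f)\setminus\Lambda_0} \|\partial_i f\|,
\end{equation*}
uniformly in $t$. Denote the averaged function by $\tilde f$, which depends only on the coordinates in $\Lambda_0$ and satisfies $\|\tilde f\|\leq\|f\|$ and $\Lambda(\tilde f)\subseteq\Lambda_0$. Applying this bound at times $t_2$ and $t_1$ and using the triangle inequality reduces the problem to estimating $|S_{t_2}^{\Lambda_0}\tilde f(0) - S_{t_1}^{\Lambda_0}\tilde f(0)|$.

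Finally, for that remaining term I use the crude bound already displayed in the excerpt just before the lemma statement (the one proved "by replacing $\Lambda_0$ in \eqref{e:AsySt} by $\Lambda(f)$"): expanding the product kernel for $S_t^{\Lambda_0}$ into a telescoping sum over the $|\Lambda_0|$ factors and using $\|\tilde f\|\leq\|f\|$,
\begin{equation*}
|S_{t_2}^{\Lambda_0}\tilde f(0) - S_{t_1}^{\Lambda_0}\tilde f(0)| \leq |\Lambda_0|\cdot\|f\| \int_{\R}\left| p\Big(\tfrac{1-e^{-\alpha t_2}}{\alpha};0,y\Big) - p\Big(\tfrac{1-e^{-\alpha t_1}}{\alpha};0,y\Big)\right| dy.
\end{equation*}
Combining the two displays gives exactly the claimed inequality. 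The main obstacle is the first step — proving the single-site replacement estimate with a constant independent of $t$ and of the frozen coordinates; once that is in hand the rest is bookkeeping via telescoping sums. A secondary technical point to watch is justifying that the partial averages $g_\ell$ stay in a class where the derivative bounds and Fubini manipulations are legitimate, but since $f\in\mathcal D^\infty$ has bounded derivatives of all orders and depends on finitely many coordinates, this is routine.
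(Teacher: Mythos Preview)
Your proposal is correct and follows essentially the same strategy as the paper: split $\Lambda(f)$ into $\Lambda_0$ and its complement, control the $\Lambda_0$ part by the crude $|\Lambda_0|\cdot\|f\|$ total-variation bound, and control the complement using the first-moment estimate of Lemma~\ref{l:StXBeta} together with $\|\partial_i f\|$. The only difference is cosmetic: the paper freezes the coordinates in $\Lambda(f)\setminus\Lambda_0$ at the value $0$ (adding and subtracting $f(y^{\Lambda_0},0)$ inside the two integrals and bounding $|f(y)-f(y^{\Lambda_0},0)|\le\sum_{i\in\Lambda(f)\setminus\Lambda_0}\|\partial_i f\|\,|y_i|$ directly), whereas you freeze them by integrating against the invariant density $p(1/\alpha;0,\cdot)$ and telescope site by site. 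The paper's version is slightly shorter since no auxiliary single-site replacement lemma is needed, but both routes rest on exactly the same moment bound and yield the same constant $C=C(\alpha)$.
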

\begin{proof}
We can easily have
\begin{equation} \label{e:AsySt}
\begin{split}
& \ \ |S_{t_2}f(0)-S_{t_1}f(0)| \\
& \leq \left|\int_{{\R}^{\Lambda_0}} \left[\prod_{i \in \Lambda_0} p \left(\frac{1-e^{-\alpha t_2}}{\alpha},0,y_i \right)-\prod_{i \in \Lambda_0} p \left(\frac{1-e^{-\alpha t_1}}{\alpha},0,y_i \right)\right] f(y^{\Lambda_0},0) dy \right| \\
&\ \ +\int_{{\R}^{\Lambda(f)}} \prod_{i \in \Lambda(f)}
p \left(\frac{1-e^{-\alpha t_1}}{\alpha},0,y_i \right) |f(y)-f(y^{\Lambda_0},0)|dy \\
&\ \ +\int_{{\R}^{\Lambda(f)}} \prod_{i \in \Lambda(f)}
p \left(\frac{1-e^{-\alpha t_2}}{\alpha},0,y_i \right) |f(y)-f(y^{\Lambda_0},0)|dy \\
&=I_0+I_1+I_2
\end{split}
\end{equation}
It is easy to see
\begin{equation*}
 I_0 \leq |\Lambda_0| \cdot ||f|| \int_{\R}
 \left|p \left(\frac{1-e^{-\alpha t_2}}{\alpha},0,y \right)-p
 \left(\frac{1-e^{-\alpha t_1}}{\alpha},0,y\right)\right|
  dy.
\end{equation*}
$I_1$ and $I_2$ can be bounded in the same way as the following: By \eqref{e:PolEst},
\begin{equation*}
\begin{split}
I_1 \leq \sum_{i \in \Lambda(f) \setminus \Lambda_0} ||\p_i f|| \int_{{\R}}
p \left(\frac{1-e^{-\alpha t_1}}{\alpha},0,y_i \right) |y_i| dy_i
\leq C \sum_{i \in \Lambda(f) \setminus \Lambda_0} ||\partial_i f||.
\end{split}
\end{equation*}
\end{proof}
The next lemma claims that if $\eta$ is sufficiently small, then the gradient
of $P^N_t f$ uniformly decays in an exponential speed.
\begin{lem} \label{l:GradientDecayL2}
There exists some $c>0$ such that if $\eta<c$, then, for any $m \in
{\mathbb N}$, we have
\begin{equation} \label{e:GB}
|\nabla P^N_t f|^{2m} \leq e^{-2m\beta t} P^N_t|\nabla f|^{2m} \ \ \
\forall \ f \in \D^\infty
\end{equation}
where $\beta=\beta(a,U)>0$. In particular,
\begin{equation} \label{e:GB2}
||\p_i P^N_t f|| \leq e^{-\beta t} |||f|||_1.
\end{equation}
\end{lem}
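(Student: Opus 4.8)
The plan is a Bakry--\'Emery type $\Gamma_2$-computation carried out directly on the Galerkin system, where $u(s):=P^N_sf$ is smooth with bounded derivatives of every order (chapter~5 of \cite{Bi02}), so all the differentiations and interchanges below are legitimate. Fix $t>0$ and $f\in\D^\infty$ with $\Lambda(f)\subset\Gamma_N$, and set $g(s)=\sum_k(\p_ku(s))^2=|\nabla u(s)|^2$, $G(s)=g(s)^m=|\nabla P^N_sf|^{2m}$, and $\phi(s)=P^N_{t-s}\big(G(s)\big)$ for $s\in[0,t]$. Since $\tfrac{d}{ds}P^N_{t-s}H=-P^N_{t-s}\mathcal{L}_NH$ and $\p_su=\mathcal{L}_Nu$, we get $\phi'(s)=P^N_{t-s}\big(\p_sG-\mathcal{L}_NG\big)$. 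So it suffices to prove the pointwise inequality
$$\p_sG-\mathcal{L}_NG\ \le\ -2m\beta\,G$$
for a suitable $\beta=\beta(a,U)>0$: then $\phi'\le-2m\beta\phi$, hence $e^{2m\beta s}\phi(s)$ is nonincreasing, and comparing $s=0$ with $s=t$ yields $e^{2m\beta t}|\nabla P^N_tf|^{2m}\le P^N_t(|\nabla f|^{2m})$, which is \eqref{e:GB}.

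To produce this pointwise inequality I would combine two ingredients. First, differentiating $\p_su=\mathcal{L}_Nu$ in $x_k$ and using the commutator $[\p_k,\mathcal{L}_N]=\sum_{i\in\Gamma_N}(a_{ik}+\p_kU_i)\p_i$ already computed in the proof of Lemma~\ref{l:FinSpePro}, one gets $\p_s\p_ku=\mathcal{L}_N\p_ku+\sum_i(a_{ik}+\p_kU_i)\p_iu$, so that $\p_sG=2m\,g^{m-1}\sum_k(\p_ku)\big[\mathcal{L}_N\p_ku+\sum_i(a_{ik}+\p_kU_i)\p_iu\big]$. Second, a convexity inequality for $\mathcal{L}_N$ applied to $G=\psi(\nabla u)$ with $\psi(v)=|v|^{2m}$ (convex, $C^1$, $\nabla\psi(v)=2m|v|^{2m-2}v$): the drift part $\sum_ib_i\p_i$ of $\mathcal{L}_N$ acts as a derivation, and for the nonlocal part the pointwise bound
$$\psi(\nabla u(x+h\mathbf e_i))-\psi(\nabla u(x))\ \ge\ \nabla\psi(\nabla u(x))\cdot\big(\nabla u(x+h\mathbf e_i)-\nabla u(x)\big)$$
($\mathbf e_i$ the $i$-th unit vector), integrated against the nonnegative kernel $C_\alpha^{-1}|h|^{-1-\alpha}dh$ and summed over $i\in\Gamma_N$, gives $\mathcal{L}_NG\ge 2m\,g^{m-1}\sum_k(\p_ku)\,\mathcal{L}_N\p_ku$. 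Subtracting, the $\mathcal{L}_N\p_ku$ terms cancel and
$$\p_sG-\mathcal{L}_NG\ \le\ 2m\,g^{m-1}\,\Theta,\qquad \Theta:=\sum_{i,k\in\Gamma_N}(a_{ik}+\p_kU_i)(\p_ku)(\p_iu).$$

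It then remains to show $\Theta\le(-1+C\eta)g$ with $C=C(K,d)$. The diagonal $i=k$ gives $\sum_k(-1+\p_kU_k)(\p_ku)^2\le(-1+\eta)g$, using $\|\p_kU_k\|\le|||U_k|||_1\le\eta$ from Assumption~\ref{a:AU}. For the off-diagonal part, $2|(\p_ku)(\p_iu)|\le(\p_ku)^2+(\p_iu)^2$ bounds it by $\tfrac12\sum_k(\p_ku)^2\sum_{i\ne k}(a_{ik}+|\p_kU_i|)+\tfrac12\sum_i(\p_iu)^2\sum_{k\ne i}(a_{ik}+|\p_kU_i|)$; by Assumption~\ref{a:AU}(4) the column sums $\sum_{i\ne k}a_{ik}$ are $\le\eta$, while the row sums $\sum_{k\ne i}a_{ik}$ and the transposed sums $\sum_{i\ne k}\|\p_kU_i\|$ have, by the finite range property, at most $(2K+1)^d$ nonzero terms, each $\le\eta$ (since $a_{ik}\le\sum_{l\ne k}a_{lk}\le\eta$ and $\|\p_kU_i\|\le|||U_i|||_1\le\eta$), and $\sum_{k\ne i}\|\p_kU_i\|\le|||U_i|||_1\le\eta$. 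Collecting, $\Theta\le(-1+C\eta)g$ with $C=2+(2K+1)^d$; choosing $c=1/(2C)$, for $\eta<c$ we may take $\beta=1-C\eta>\tfrac12$, establishing \eqref{e:GB}. Finally \eqref{e:GB2} is the case $m=1$: $\|\p_iP^N_tf\|^2\le\sup_x|\nabla P^N_tf(x)|^2\le e^{-2\beta t}\sup_xP^N_t(|\nabla f|^2)(x)\le e^{-2\beta t}\sup_x\sum_i|\p_if(x)|^2\le e^{-2\beta t}(|||f|||_1)^2$.

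The main obstacle is the convexity/sub-commutation inequality for the nonlocal generator: one has to justify $\p_i^\alpha\big(\psi(\nabla u)\big)(x)\ge\nabla\psi(\nabla u(x))\cdot\p_i^\alpha(\nabla u)(x)$ as an honest pointwise estimate, which needs the convexity of $\psi$, the positivity of $C_\alpha$ (immediate from \eqref{e:fraclap}), and enough regularity and boundedness of $u=P^N_sf$ for $\p_i^\alpha\nabla u$ and $\p_i^\alpha(\psi(\nabla u))$ to be well defined and for the finite sum over $i\in\Gamma_N$ to be moved inside the singular integral --- all of which hold because the Galerkin dynamics is finite-dimensional with a smooth transition semigroup having bounded derivatives of all orders (and for $\alpha=2$ the inequality is just the classical second-order Bochner-type computation). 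The remaining bookkeeping of row versus column sums under Assumption~\ref{a:AU} costs only a constant depending on $K$ and $d$.
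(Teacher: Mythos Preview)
Your argument is correct and is essentially the same Bakry--\'Emery interpolation as the paper's: you differentiate $s\mapsto P^N_{t-s}|\nabla P^N_sf|^{2m}$ (the paper uses the time-reversed $s\mapsto P^N_s|\nabla P^N_{t-s}f|^{2m}$, which is equivalent), use the convexity inequality $\mathcal L_N\psi(\nabla u)\ge\nabla\psi(\nabla u)\cdot\mathcal L_N\nabla u$ to cancel the $\mathcal L_N\partial_k u$ terms, and reduce to positivity of the same quadratic form $Q$. Your treatment of the convexity step via the integral representation of $\partial_i^\alpha$ is in fact more explicit than the paper's (which only states the scalar Jensen inequality $\mathcal L_NF^{2m}\ge 2mF^{2m-1}\mathcal L_NF$ and tacitly passes to the vector case), and your explicit constant $C=2+(2K+1)^d$ refines the paper's ``$\eta$ small enough''.
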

\begin{proof}
For the notational simplicity, we drop the index of the quantities if no confusions arise.
For any $f \in \D^\infty$ and any $m \in {\mathbb N}$, we have the
following calculation
\begin{equation*}
\begin{split}
\frac{d}{ds}P_s |\nabla P_{t-s}f|^{2m}&=P_s[\mathcal{L}_N|\nabla
P_{t-s} f|^{2m}-2m |\nabla P_{t-s}f|^{2(m-1)} \nabla P_{t-s} f
\cdot \mathcal{L}_N \nabla P_{t-s}
f] \\
&\ \ \ +2m P_s(|\nabla P_{t-s}|^{2(m-1)} \nabla P_{t-s} f \cdot
[\mathcal{L}_N,\nabla] P_{t-s} f) \\
& \geq 2m P_s(|\nabla P_{t-s}|^{2(m-1)} \nabla P_{t-s} f \cdot
[\mathcal{L}_N,\nabla] P_{t-s} f),
\end{split}
\end{equation*}
since we have $P_tF^{2m}\geq (P_tF)^{2m}$ and
therefore $\lim \limits_{t \rightarrow 0+} \frac{P_tF^{2m}-F^{2m}}{t}
\geq \lim \limits_{t \rightarrow 0+}
\frac{(P_tF)^{2m}-F^{2m}}{t}$, i.e.
$\mathcal{L}_N F^{2m}-2m F^{2(m-1)} F \mathcal{L}_N F \geq 0.$ \\

Hence,
\begin{equation*}
\begin{split}
& \ \ \frac{d}{ds}P_s |\nabla P_{t-s} f|^{2m} \\
& \geq -2m P_s \left \{|\nabla P_{t-s} f|^{2(m-1)} \sum_{i \in \Gamma_N}
\sum_{j \in \Gamma_N \setminus i} (a_{ij}+\p_j U_{i}) (\p_i
P_{t-s} f) (\p_j P_{t-s} f) \right\} \\
& \ \ \ +2m P_s \left \{|\nabla P_{t-s} f|^{2(m-1)}\sum_{i \in \Gamma_N} (1-\p_iU_i)|\p_i P_{t-s}f|^2 \right\}. \\
\end{split}
\end{equation*}
Define the quadratic form
$$Q(\xi, \xi):=-\sum_{i \in \Gamma_N}
\sum_{j \in \Gamma_N \setminus i} (a_{ij}+\p_j U_{i})\xi_i \xi_j+\sum_{i \in \Gamma_N}
(1-\p_iU_{i}) \xi^2_i$$
for any $\xi \in \R^{\Gamma_N}$. As $\eta$ in (4) of Assumption \ref{a:AU} is small enough,
it is strictly positive
definite, that is, we have some constant $c, \beta>0$ such that as $\eta<c$,
$$Q(\xi,\xi) \geq \beta |\xi|^2.$$
Therefore,
\begin{equation*}
%\begin{split}
\frac{d}{ds}P_s |\nabla P_{t-s} f|^{2m}  \geq 2 m \beta P_s\left
(|\nabla P_{t-s} f|^{2m}\right),
%\end{split}
\end{equation*}
from which we immediately obtain \eqref{e:GB}.
\eqref{e:GB2} is an immediate corollary of \eqref{e:GB}.
\end{proof}
The last lemma can be taken as the second order finite speed propagation of interactions,
which is also due to the finite range interactions.
\begin{lem} \label{l:TriBar2}
There exists some constant $C=C(a,U)>0$ so that
\begin{equation} \label{e:2BarNorEst}
|||P^N_tf|||_2 \leq e^{Ct}(|||f|||_1+|||f|||_2).
\end{equation}
Moreover, there exists some constant $B=B(a,U) \geq 1$ so that as
$$dist(j, \Lambda(f)), dist(j, \Lambda(f)) \geq Bt,$$ we have
\begin{equation} \label{e:TriBar2}
||\partial_{jk}P^N_tf|| \leq h_{jk} (|||f|||_1+|||f|||_2).
\end{equation}
where $\{h_{jk}\}_{j,k \in \Z^d}$ is a constant sequence only depending on $a$ and $U$, and
 $$h_{jk} \geq 0,
\sum \limits_{j,k \in \Z^d} h_{jk}<\infty.$$
\end{lem}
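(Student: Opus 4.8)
The plan is to mimic the proof of Lemma~\ref{l:FinSpePro}, but now tracking second derivatives. The starting point is the commutator identity. For fixed $j,k$, writing $\partial_{jk}P^N_tf$ via Duhamel against the $P^N$-semigroup, one has
$$\frac{d}{ds}P^N_{t-s}\partial_{jk}P^N_sf = P^N_{t-s}\big[\partial_{jk},\mathcal L_N\big]P^N_sf.$$
The key algebraic step is to compute $[\partial_{jk},\mathcal L_N]$. Since $[\partial_i^\alpha,\partial_j]=0$ and the drift is $\sum_i(\sum_\ell a_{i\ell}x_\ell + U^N_i(x))\partial_i$, one finds
$$[\partial_{jk},\mathcal L_N] = \sum_{i}(a_{ik}+\partial_kU_i)\,\partial_{ij} + \sum_i(a_{ij}+\partial_jU_i)\,\partial_{ik} + \sum_i(\partial_{jk}U_i)\,\partial_i.$$
So, unlike the first-order case, the second-derivative equation couples to first derivatives through the term $\sum_i(\partial_{jk}U_i)\partial_i$ — this is exactly why the bound in \eqref{e:2BarNorEst} and \eqref{e:TriBar2} involves $|||f|||_1+|||f|||_2$ rather than just $|||f|||_2$. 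Integrating the identity gives
$$\|\partial_{jk}P^N_tf\|\le \|\partial_{jk}f\| + \int_0^t\!\Big[\sum_i(|a_{ik}|+\|\partial_kU_i\|)\|\partial_{ij}P^N_sf\| + \sum_i(|a_{ij}|+\|\partial_jU_i\|)\|\partial_{ik}P^N_sf\| + \sum_i\|\partial_{jk}U_i\|\,\|\partial_i P^N_sf\|\Big]ds.$$

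\textbf{Proof of \eqref{e:2BarNorEst}.} Sum the displayed inequality over all $j,k\in\Z^d$. The first two double sums each contribute at most $2\eta\,|||P^N_sf|||_2$ after using Assumption~\ref{a:AU}(4) (each $\sum_{i,j}(|a_{ik}|+\|\partial_kU_i\|)\le$ a constant times $\eta$, uniformly in $k$, then sum in $k$ with a diagonal factor). The last sum, using $\sup_j|||U_j|||_2<\infty$ from Assumption~\ref{a:AU}(4), is bounded by $C\,|||P^N_sf|||_1\le Ce^{(\eta+1)s}|||f|||_1$ via \eqref{e:TriBar}. Setting $g(s)=|||P^N_sf|||_2$ we obtain $g(t)\le |||f|||_2 + C\int_0^t g(s)\,ds + C\int_0^t e^{(\eta+1)s}|||f|||_1\,ds$, and Gronwall's inequality yields $|||P^N_tf|||_2\le e^{Ct}(|||f|||_1+|||f|||_2)$ for a suitable $C=C(a,U)$.

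\textbf{Proof of \eqref{e:TriBar2}.} This is the finite-speed-of-propagation part and the main obstacle. The idea is to iterate the integral inequality. Treat the vector of second derivatives indexed by pairs $(j,k)$ together with the vector of first derivatives indexed by single sites as one big system; schematically the iteration matrix has a block-triangular structure: second-order entries propagate among themselves with finite-range hops (matrix entries of $c+a$ as in Lemma~\ref{l:FinSpePro}, now acting in both the $j$ and $k$ slots), and feed down into first order via the finite-range kernel $\partial_{jk}U_i$ (which vanishes unless $|i-j|\le K$ and $|i-k|\le K$), while the first-order block has already been controlled by \eqref{e:GB2} or \eqref{e:PNtIte}. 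Iterating $n$ times, any nonzero contribution to $\partial_{jk}P^N_tf$ starting from $\partial_i f$ or $\partial_{i\ell}f$ with $i,\ell\in\Lambda(f)$ requires at least $n\ge \max(\mathrm{dist}(j,\Lambda(f)),\mathrm{dist}(k,\Lambda(f)))/K$ steps, so the sum over $n$ up to that threshold vanishes and the tail is dominated by $\frac{t^{n_{jk}}}{n_{jk}!}e^{Ct}(|||f|||_1+|||f|||_2)$ with $n_{jk}=[\max(\mathrm{dist}(j,\Lambda(f)),\mathrm{dist}(k,\Lambda(f)))/K]$; choosing $B$ large as in Lemma~\ref{l:FinSpePro} converts $\frac{t^{n_{jk}}}{n_{jk}!}e^{Ct}$ into $h_{jk}$ with $h_{jk}\le e^{-A(|j-j_0|+|k-k_0|)}$-type decay once $\mathrm{dist}(j,\Lambda(f)),\mathrm{dist}(k,\Lambda(f))\ge Bt$, and summability $\sum_{j,k}h_{jk}<\infty$ follows because each of the two exponential factors in $d$ dimensions is summable. \textbf{The hard part} is bookkeeping the two-index combinatorics cleanly — verifying that the down-coupling term $\partial_{jk}U_i$, being doubly finite-range, does not destroy the ``need $n_{jk}$ hops'' counting, and assembling the $h_{jk}$ so both the decay-beyond-$Bt$ statement and the global summability hold simultaneously; modulo this the estimate reduces to the one-index argument already carried out in Lemma~\ref{l:FinSpePro}.
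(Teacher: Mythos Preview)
Your proposal is correct and follows essentially the same approach as the paper: compute the commutator $[\partial_{jk},\mathcal L_N]$ via Duhamel to obtain the integral inequality coupling $\|\partial_{jk}P^N_sf\|$ to itself and to $\|\partial_i P^N_sf\|$, then sum over $j,k$ and apply Gronwall for \eqref{e:2BarNorEst}, and iterate as in Lemma~\ref{l:FinSpePro} for the finite-speed estimate \eqref{e:TriBar2}. The paper's own proof is in fact considerably sketchier than yours---it writes down the same integral inequality \eqref{e:PNtIte2} and then simply asserts ``by some iteration similar to that for \eqref{e:PNtIte}'' without spelling out the block-triangular bookkeeping you describe; your discussion of why the down-coupling term $\partial_{jk}U_i$ (being doubly finite-range) preserves the hop-counting is exactly the content the paper leaves implicit.
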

\begin{proof}
Furthermore, by differentiating $P^N_{t-s} \p_{kj} P^N_s f$ (on $s$), we have
\begin{equation} \label{e:PNtIte2}
\begin{split}
||\partial_{kj} P^N_tf|| &\leq ||\p_{kj} f||+\int_0^t ||[\partial_{kj}, \mathcal{L}_N] P^N_s f||ds \\
& \leq ||\p_{kj} f||+\int_0^t \sum \limits_{i \in
\Gamma_N} [(|a_{ik}|+||\partial_k U_i||) ||\partial_{ji}P_s^Nf||+(|a_{ij}|+||\partial_j U_i||)||\partial_{ki}P_s^Nf||]ds \\
& \ \ + \int_0^t \sum \limits_{i \in \Gamma_N} ||\partial_{jk} U_i|| ||\partial_{i}P_s^Nf|| ds
\end{split}
\end{equation}
By some iteration similar to that for \eqref{e:PNtIte}, we have some convergent sequence $\{h_{jk}\}_{j,k \in {\Z}^d}$
only depending on $a$ and $U$ so that ${||\partial_{kj} P^N_tf||}_{j,k \in {\Z}^d} \leq h_{jk}
(|||f|||_1+|||f|||_2)$ (The
$h_{jk}$ here plays a similar role as that of $e^{-At-An_k}$ in \eqref{e:expfinite}).
Moreover, Summering $i,k$ over $\Z^d$ in \eqref{e:PNtIte2},
we immediately have \eqref{e:2BarNorEst}.
\end{proof}
\subsection{Proof of Theorem}
The proof of Theorem is lengthy, we first prove the following Proposition
\ref{p:ErgAt0}, which is the crucial step.
\begin{prop} \label{p:ErgAt0}
For any $f \in \mcl D^\infty$, the limit $\lim_{t \rightarrow \infty}P_tf(0)$ exists.
\end{prop}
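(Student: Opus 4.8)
The plan is to show that $\{P_t f(0)\}_{t>0}$ is Cauchy as $t \to \infty$, using the Du Hamel formula \eqref{e:DuHam} together with the three auxiliary lemmas. Fix $f \in \mcl D^\infty$ and $t_2 > t_1$; I want to estimate $|P^N_{t_2} f(0) - P^N_{t_1} f(0)|$ uniformly in $N$ (large) and then pass to the limit via \eqref{e:DefPtf}. Applying \eqref{e:DuHam} at both times and subtracting, the difference splits into a "free part" $S_{t_2} f(0) - S_{t_1} f(0)$ and an "interaction part" which is itself a difference of two time-integrals of $S_{t-s}\{\sum_i [\sum_j a_{ij} y_j + U^N_i]\p_i P^N_s f\}$. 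The free part is controlled by Lemma \ref{l:ConStf}: choosing a finite $\Lambda_0 \subset \Lambda(f)$ appropriately, the first term there tends to $0$ as $t_1, t_2 \to \infty$ (by $L^1$-continuity of the stable transition density in its time parameter) and the tail $\sum_{i \in \Lambda(f) \setminus \Lambda_0}\|\p_i f\|$ can be made small once $|\Lambda_0|$ is large, since $|||f|||_1 < \infty$.

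The heart of the matter is the interaction part. Here the key input is the gradient decay \eqref{e:GB2} from Lemma \ref{l:GradientDecayL2}: for $\eta < c$ we have $\|\p_i P^N_s f\| \le e^{-\beta s}|||f|||_1$, so the integrand in \eqref{e:DuHam} decays exponentially in $s$. This makes the time integral $\int_0^\infty(\cdots)ds$ absolutely convergent, so that $P^N_t f(0) \to$ (a limit) is plausible; but to get a genuine Cauchy estimate I need to compare the interaction integrals at $t_1$ and $t_2$, not just bound each. I would write, schematically,
\begin{equation*}
\int_0^{t_2} S_{t_2 - s} G^N_s(0)\,ds - \int_0^{t_1} S_{t_1 - s} G^N_s(0)\,ds = \int_0^{t_1} \bigl(S_{t_2 - s} - S_{t_1 - s}\bigr) G^N_s(0)\,ds + \int_{t_1}^{t_2} S_{t_2 - s} G^N_s(0)\,ds,
\end{equation*}
where $G^N_s = \sum_i[\sum_j a_{ij}y_j + U^N_i]\p_i P^N_s f$. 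The second integral is bounded by $\int_{t_1}^{t_2} \|G^N_s\|_{\text{eff}}\,ds$, which is small for $t_1$ large because of the $e^{-\beta s}$ decay — here I must be careful that the linear growth factor $y_j$ is tamed by the first-moment bound \eqref{e:1MomPNt} and that the finite-range property keeps the $j$-sum finite; the residual polynomial-in-$s$ factors are beaten by $e^{-\beta s}$. The first integral is where I apply Lemma \ref{l:ConStf} again, now to the function $G^N_s$ (after localizing): $|(S_{t_2-s} - S_{t_1-s})G^N_s(0)|$ is bounded by an $L^1$-density-difference term times $|\Lambda_0| \cdot \|G^N_s\|$ plus a tail $\sum_{i \notin \Lambda_0}\|\p_i G^N_s\|$. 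The tail term requires controlling second derivatives $\p_{ji} P^N_s f$, which is exactly what Lemma \ref{l:TriBar2} supplies via the summable sequence $\{h_{jk}\}$, so that $\sum_{i \notin \Lambda_0}\|\p_i G^N_s\| \to 0$ as $\Lambda_0 \uparrow \Z^d$, uniformly in $s$ (again using $e^{-\beta s}$ decay where needed). The density-difference term goes to $0$ as $t_1 \to \infty$ for each fixed $s$, and is dominated by an integrable-in-$s$ bound, so dominated convergence finishes it.

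Finally, all the estimates above are uniform in $N$ (the constants in Lemmas \ref{l:FinSpePro}–\ref{l:TriBar2} and in \eqref{e:1MomPNt} depend only on $a, U, \rho, R, d, K$, not on $N$), so I may let $N \to \infty$ using \eqref{e:DefPtf} to conclude that $\{P_t f(0)\}_{t>0}$ is Cauchy, hence convergent. I expect the main obstacle to be organizing the tail estimates: one must simultaneously split the lattice (into $\Lambda_0$ and its complement) to exploit $|||f|||_1, |||f|||_2 < \infty$ and split the time interval (near $0$, near $t_1$, between $t_1$ and $t_2$) to exploit the gradient decay, and these two splittings interact — in particular the "finite speed of propagation" estimates \eqref{e:expfinite} and \eqref{e:TriBar2} are only effective once $s$ is not too large compared to the relevant lattice distance, so the bookkeeping of which regime dominates where needs care. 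The linear interaction terms $a_{ij}y_j$, which are unbounded in $y$, are the other delicate point, handled by pairing the exponential gradient decay against the at-most-polynomial moment growth \eqref{e:1MomPNt}.
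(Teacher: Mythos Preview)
Your overall strategy matches the paper's: Du~Hamel, then control the free part via Lemma~\ref{l:ConStf}, the large-$s$ part of the interaction integral via the gradient decay of Lemma~\ref{l:GradientDecayL2}, and the remaining $(S_{t_2-s}-S_{t_1-s})$-difference via Lemma~\ref{l:ConStf} together with the second-order finite-speed estimate of Lemma~\ref{l:TriBar2}. Two concrete steps, however, do not go through as written.

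First, splitting the interaction integral at $s=t_1$ is the wrong cut. The paper splits at a \emph{fixed} level $L$ (large but independent of $t_1,t_2$), obtaining tails $J_1=\int_L^{t_2}$, $J_2=\int_L^{t_1}$ (small by the gradient decay) and a main piece $J_3=\int_0^L(S_{t_2-s}-S_{t_1-s})G^N_s(0)\,ds$. The point is that on $J_3$ one has $s\le L$, so (i) $t_1-s,\,t_2-s\to\infty$ uniformly in $s$, making the density-difference in Lemma~\ref{l:ConStf} genuinely small, and (ii) the hypothesis $\mathrm{dist}(j,\Lambda(f))\ge Bs$ in \eqref{e:TriBar2} can be arranged with a \emph{fixed} cube $\tilde\Lambda$ independent of $N,t_1,t_2$. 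With your split, $s$ ranges over $[0,t_1]$; for $s$ near $t_1$ the density-difference is not small, and for large $s$ the second-derivative tail $\sum_{i\notin\Lambda_0}\|\partial_i G^N_s\|$ is only controlled by the exponentially growing \eqref{e:2BarNorEst}, not by the summable $h_{jk}$. Your appeal to dominated convergence and ``$e^{-\beta s}$ decay where needed'' does not close this, because $\partial_j G^N_s$ contains $\partial_{ji}P^N_s f$, for which no first-order decay is available. In practice one is forced to introduce the fixed cut $L$ anyway, which reduces to the paper's decomposition $J_1,J_2,J_3$.

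Second, you cannot apply Lemma~\ref{l:ConStf} to $G^N_s$ directly: the lemma is stated for $f\in\mathcal{D}^\infty$, in particular bounded, while $G^N_s$ contains the linear factors $y_j$ and hence $\|G^N_s\|=\infty$, so the term $|\Lambda_0|\cdot\|G^N_s\|$ in your bound is meaningless. The paper handles this (in its Step~3) by inserting a smooth cutoff $\chi(|y_j|/H)$: the bounded piece $y_j\chi(|y_j|/H)\,\partial_iP^N_sf$ lies in $\mathcal{D}^\infty$ and Lemma~\ref{l:ConStf} applies to it (this is where Lemma~\ref{l:TriBar2} enters, to control the resulting second-derivative tail over $\Gamma_N\setminus\tilde\Lambda$); the remainder $y_j(1-\chi(|y_j|/H))$ is made small by the uniform integrability of $|y|$ under the stable kernel (Lemma~\ref{l:StXBeta} with $1<\beta<\alpha$), for $H$ large \emph{before} sending $t_1,t_2\to\infty$. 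Your remark that the unbounded $y_j$ is ``handled by pairing the exponential gradient decay against the at-most-polynomial moment growth~\eqref{e:1MomPNt}'' bounds a single $S_tG^N_s(0)$ but not the difference $(S_{t_2-s}-S_{t_1-s})G^N_s(0)$; and the relevant moment bound here is \eqref{e:PolEst} for the free semigroup $S_t$, not \eqref{e:1MomPNt} for $P^N_t$.
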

\begin{proof}
To prove the proposition, it suffices to show that for arbitrary $\e>0$,
there exists some constant $T>0$ such that
as $t_2 \geq t_1 \geq T$
\begin{equation} \label{e:Pt2-Pt1e}
|P_{t_2} f(0)-P_{t_1}f(0)| \leq 4 \e.
\end{equation}
Let us show this inequality in the following three steps:
 \\

 \noindent \emph{\underline{Step 1: Proof of \eqref{e:Pt2-Pt1e}}}.
 For any $t_2 \geq t_1 \geq 0$, by triangle inequality, we have
\begin{equation*} \label{e:01}
\begin{split}
|P_{t_2}f(0)-P_{t_1}f(0)| & \leq |P_{t_2}f(0)-P^N_{t_2}f(0)|\\
&\ \ +|P^N_{t_2}f(0)-P^N_{t_1}f(0)|+|P_{t_1}f(0)-P^N_{t_1}f(0)|.
\end{split}
\end{equation*}
By \eqref{e:DefPtf}, there
exists some $N_0=N_0(t_1,t_2) \in {\mathbb N}$ such that as $N>N_0$
\begin{equation} \label{e:PtPNt}
|P_{t_2}f(0)-P^N_{t_2}f(0)|+|P_{t_1}f(0)-P^N_{t_1}f(0)|<\varepsilon.
\end{equation}
Hence, to conclude the proof, we only need to show that, there exists some constant $T>0$,
which is
independent of $N$, such that as $t_2, t_1>T$
\begin{equation} \label{e:PNt2-PNt1}
|P^N_{t_2} f(0)-P^N_{t_1} f(0)| <3\e.
\end{equation}
\ \\
\indent By \eqref{e:DuHam}, the l.h.s. of \eqref{e:PNt2-PNt1}
can be split into the following \emph{two} pieces:
\begin{equation} \label{e:ErgodicAt0}
\begin{split}
|P^N_{t_2}f(0)&-P^N_{t_1}f(0)| \leq
|S_{t_2}f(0)-S_{t_1}f(0)| \\
&+|\int_{0}^{t_2} S_{t_2-s} \left[\sum \limits_{i
\in \Gamma_N}(\sum_{j \in \Gamma_N \setminus i} a_{ij} x_j+
U_{i,N}) \partial_i P^N_s f\right](0)ds\\
&\ \ \ \ -\int_{0}^{t_1} S_{t_1-s} \left[\sum \limits_{i \in
\Gamma_N}(\sum_{j \in \Gamma_N \setminus i} a_{ij} x_j+ U_{i,N})
\partial_i P^N_s f\right](0)ds|.
\end{split}
\end{equation}
By the fact $f \in
\mathcal{D}^{\infty}$ and \eqref{e:DefSt}, we have some constant
$T_0=T_0(|\Lambda(f)|,||f||)>0$ such that as $t_1, t_2 \geq T_0$
\begin{equation} \label{e:ErgSt2St1}
|S_{t_2}f(0)-S_{t_1}f(0)| \leq \e.
\end{equation} \\ \
\indent From the above inequality,
to conclude the proof of \eqref{e:PNt2-PNt1}, it suffices to bound the second term '$|\cdots|$' on
the r.h.s. of \eqref{e:ErgodicAt0} by $2\e$.
To this end, we split it into three pieces as the following
\begin{equation*}
\begin{split}
& J_1=\int_{L}^{t_2} S_{t_2-s} \left[\sum \limits_{i \in
\Gamma_N}(\sum_{j \in \Gamma_N \setminus i} a_{ij} x_j+ U^N_{i})
\partial_i P^N_s f\right](0)ds  \\
& J_2=\int_{L}^{t_1} S_{t_1-s} \left[\sum \limits_{i
\in \Gamma_N}(\sum_{j \in \Gamma_N \setminus i} a_{ij} x_j+
U^N_{i}) \partial_i P^N_s f \right](0)ds \\
&J_3=\int_{0}^{L} (S_{t_2-s}-S_{t_1-s}) \left[\sum \limits_{i \in
\Gamma_N} (\sum_{j \in \Gamma_N \setminus i} a_{ij} x_j+U^N_{i})
\partial_i P^N_s f \right](0)ds
\end{split}
\end{equation*}
where $0<L<t_1$ is some large number to be determined later, and show that there exists some
constant $T>0$, which is independent of $N$ and larger than $L$, such that as $t_1,t_2 \geq T$
\begin{equation} \label{e:J1J2J3e}
|J_1|+|J_2|+|J_3| \leq 2\e.
\end{equation}
\\
\noindent \emph{\underline{Step 2: Proof of \eqref{e:J1J2J3e}}}.
As for $|J_1|$, choose some $m \in \N$ with  $2m/(2m-1)<\alpha$, by H$\ddot o$der's
inequality, \eqref{e:PolEst}, (3) of Assumption \ref{a:AU} and \eqref{e:GB},
\begin{equation*}
\begin{split}
& \ \ \left|\int_{L}^{t_2} S_{t_2-s} \left[\sum \limits_{i \in \Gamma_N}
\sum_{j \in \Gamma_N \setminus i} a_{ij} x_j
\partial_i P^N_s f\right](0)ds\right| \\
 & \leq \left|\int_{L}^{t_2}  \sum \limits_{i \in \Gamma_N}
\sum_{j \in \Gamma_N \setminus i} a_{ij} \{S_{t_2-s}
[|x_j|^{\frac{2m}{2m-1}}](0)\}^{\frac{2m-1}{2m}} \{S_{t_2-s}
[|\partial_i P^N_s f|^{{2m}}](0)\}^{\frac{1}{2m}} ds \right| \\
& {\leq} C(m) \eta K^d \left|\int_{L}^{t_2}
\{S_{t_2-s} [|\nabla P^N_s f|^{{2m}}](0)\}^{\frac{1}{2m}} ds\right|\\
& {\leq} C(m, \eta, K, d)
|||f||| \ (e^{-L
\beta}-e^{-t_2 \beta}).
\end{split}
\end{equation*}
By a similar arguments, we have
\begin{equation*}
\begin{split}
\left|\int_{L}^{t_2} S_{t_2-s} \left[\sum \limits_{i \in \Gamma_N} U^N_{i}
\partial_i P^N_s f \right](0) ds\right| &\leq  \sup_i ||U_{i}|| \int_{L}^{t_2} \left \{S_{t_2-s}
[|\nabla P^N_s f|^2](0) \right \}^{1/2} ds \\
& {\leq} C|||f||| \ (e^{-\beta L}-e^{-\beta t_2}).
\end{split}
\end{equation*}
Hence,
\begin{equation*} \label{e:J1}
|J_1| \leq C(m, \eta, K,d) |||f||| \ (e^{-L
\beta}-e^{-t_2 \beta}).
\end{equation*}
By the same method as for $J_1$, one has
\begin{equation*} \label{e:J2}
|J_2| \leq C(m,\eta,K,d) |||f||| \ (e^{-L
\beta}-e^{-t_1 \beta}).
\end{equation*}
Therefore, there exists some constant $L>0$, which is independent of
$N$, such that as $t_1,t_2 \geq L$,
\begin{equation} \label{e:J1J2e}
|J_1|+|J_2| \leq \e
\end{equation}
\ \\
\indent To bound $|J_3|$, choose some cube $\Lambda \subset \Z^d$, which is centered at
 $0$, such that $\Lambda(f) \subset \Lambda \subset \Gamma_N$ and split $|J_3|$ into three pieces:
\begin{equation*} \label{e:J3}
\begin{split}
|J_3| & \leq \left|\int_{0}^{L} S_{t_2-s} \left[\sum \limits_{i \in \Gamma_N \setminus \Lambda} (\sum_{j \in \Gamma_N \setminus i} a_{ij} x_j+U^N_{i})
\partial_i P^N_s f \right](0)ds\right|\\
& \ +\left|\int_{0}^{L} S_{t_2-s} \left[\sum \limits_{i \in \Gamma_N \setminus \Lambda} (\sum_{j \in \Gamma_N \setminus i} a_{ij} x_j+U^N_{i})
\partial_i P^N_s f \right](0)ds\right|\\
& \ +\left|\int_{0}^{L} (S_{t_2-s}-S_{t_1-s}) \left[\sum \limits_{i \in \Lambda} (\sum_{j \in \Gamma_N \setminus i} a_{ij} x_j+U^N_{i})
\partial_i P^N_s f\right](0)ds\right| \\
&=I_1+I_2+I_3.
\end{split}
\end{equation*}
where $I_1,I_2$ and $I_3$ denotes the three terms on the r.h.s. of the above inequality in order. \\

By Lemma \ref{l:FinSpePro} and \eqref{e:PolEst}, for any $A>0$, choose $B \geq 1$ as in Lemma
\ref{l:FinSpePro}, as $\Lambda$ is sufficiently large, which is chosen independent of
$N$, so that $dist(\Lambda^c, \Lambda(f)) \geq BL$ and the following
'$\sum_{i \in \Z^d \setminus \Lambda} e^{-n_i A}$' is sufficiently small,
by (3) of Assumption \ref{a:AU} and \eqref{e:PolEst}, we have
\begin{equation}
\begin{split}
& \ \ \left|S_{t_2-s} \left[\sum \limits_{i \in \Gamma_N \setminus \Lambda}
\sum_{j \in \Gamma_N \setminus i} a_{ij} x_j
\partial_i P^N_s f\right](0)\right| \\
& \leq \sum \limits_{i \in \Gamma_N \setminus \Lambda} e^{-n_i A-As}|||f|||_1
\sum_{j \in \Gamma_N \setminus i}a_{ij} S_{t_2-s}[|x_j|](0) \\
& \leq C K^d (1+\eta) \sum \limits_{i \in \Gamma_N \setminus \Lambda} e^{-n_i A}
|||f|||_1 \leq \frac{\e}{8L}
\end{split}
\end{equation}
where $n_i=[dist(i, \Lambda(f))/K]$. Similarly, choose some sufficiently large $\Lambda$ independent of
$N$, one has
$$\left|S_{t_2-s} \left[\sum \limits_{i \in \Gamma_N \setminus \Lambda}
U^N_i(y)
\partial_i P^N_s f\right](0)\right| \leq \frac{\e}{8L}.$$
Therefore, $I_1 \leq \frac{\e}{4L}$, one has $I_2 \leq \frac{\e}{4L}$ by the same method. \\

Hence, as $\Lambda$ is sufficiently large, which is independent of $N$, one has
 \begin{equation} \label{e:I1I2e}
I_1+I_2 \leq \frac{\e}{2L}.
\end{equation}
In the following Step 3, we shall prove that there exists some constant $T>L$,
which is \emph{independent} of $N$ but \emph{depends} on $|\Lambda|,L$, so that
as $t_1,t_2 \geq T$
\begin{equation} \label{e:I3e}
I_3 \leq \frac \e{2L},
\end{equation}
thus $|J_3| \leq \e$. Combining this with \eqref{e:J1J2e}, we conclude the proof of \eqref{e:J1J2J3e}.
\\

\noindent \emph{\underline{Step 3: Proof of \eqref{e:I3e}}}. Let us first consider the linear part of
$I_3$, it can be split into two pieces:
\begin{equation} \label{e:J3Aux}
\begin{split}
& \ \ \left|(S_{t_2-s}-S_{t_1-s}) [\ \sum \limits_{i \in \Lambda} \sum_{j \in \Gamma_N \setminus i} a_{ij} x_j
\partial_i P^N_s f\ ](0)\right| \\
& \leq  \left|(S_{t_2-s}-S_{t_1-s}) [\ \sum \limits_{i \in \Lambda} \sum_{j \in \Gamma_N \setminus 2 \Lambda} a_{ij} x_j
\partial_i P^N_s f\ ](0)\right| \\
&\ \ +\left|(S_{t_2-s}-S_{t_1-s}) [\ \sum \limits_{i \in \Lambda} \sum_{j \in 2 \Lambda \setminus i} a_{ij} x_j \partial_i P^N_s f\ ](0)\right|,
\end{split}
\end{equation}
where $2\Lambda$ is the cube centered at $0$, whose edge length is two times as that of $\Lambda$.
By (3) of Assumption \ref{a:AU}, if $\Lambda$ is large enough,
the first term on the r.h.s. of \eqref{e:J3Aux} are zero since $a_{ij}=0$ for all $i,j$ therein. \\

As for the last term on the r.h.s. of \eqref{e:J3Aux}, we need
some delicate analysis as the following: Take some large number $H>0$
(to be determined later) and split the term into two pieces, and control
them by some uniform integrability and Lemma \ref{l:ConStf}, more precisely, we have
\begin{equation} \label{e:St2sSt1s}
\begin{split}
& \ \ \left|(S_{t_2-s}-S_{t_1-s}) [\ \sum \limits_{i \in \Lambda} \sum_{j \in 2 \Lambda \setminus i}
a_{ij} x_j \partial_i P^N_s f\ ](0)\right| \\
& \leq  \left|(S_{t_2-s}-S_{t_1-s}) [\ \sum \limits_{i \in \Lambda} \sum_{j \in 2 \Lambda \setminus i}
a_{ij} x_j  \left(1-\chi(\frac{|x_j|}{H})\right) \partial_i P^N_s f\ ](0)\right| \\
& \ + \left|(S_{t_2-s}-S_{t_1-s}) [\ \sum \limits_{i \in \Lambda} \sum_{j \in 2 \Lambda \setminus i}
a_{ij} x_j \chi(\frac{|x_j|}{H}) \partial_i P^N_s f\ ](0)\right|
\end{split}
\end{equation}
where $\chi:[0, \infty) \rightarrow [0, \infty)$ is some smooth function such that
$\chi(x)=1$ as $0 \leq x \leq 1$ and $\chi(x)=0$ as $x \geq 2$.
By \eqref{e:PolEst} with $1<\beta<\alpha$ therein, the fact $a_{ij} \leq 1+\eta$ and
\eqref{e:TriBar},
(note that $s \leq L$),
the first term on the r.h.s. of \eqref{e:St2sSt1s}
can be bounded by
\begin{equation*}
\begin{split}
& \ \ \ 2^d|\Lambda| (1+\eta) |||P^N_s f|||_1 \left(S^0_{t_2-s}+S^0_{t_1-s}\right)
\left[|x| \left(1-\chi(\frac{|x|}{H})\right)\right] \\
& \leq 2^d|\Lambda| (1+\eta) e^{L(1+\eta)} |||f|||_1 \left(S^0_{t_2-s}+S^0_{t_1-s}\right)
\left[|x| \left(1-\chi(\frac{|x|}{H})\right)\right] \\
& \leq \frac{\e}{12L}
\end{split}
\end{equation*}
as $H>H_0$ if $H_0=H_0(d,|\Lambda|,L,\eta)>0$ is sufficiently large. \\

As for the second term of \eqref{e:St2sSt1s},
by \eqref{e:TriBar}, the fact $s \leq L$ and Lemma \ref{l:ConStf}, one can
choose some cube $\tilde \Lambda \supset 2\Lambda$ (to be determined later), so that
\begin{equation*}
\begin{split}
& \ \ \left|(S_{t_2-s}-S_{t_1-s}) [\ \sum \limits_{i \in \Lambda}
\sum_{j \in 2 \Lambda \setminus i} a_{ij} x_j \chi(\frac{|x_j|}{H}) \partial_i P^N_s f\ ](0)\right| \\
& \leq C(H,L,\eta,|\Lambda|,|\tilde \Lambda|,d) |||f|||_1 \int_{\R}
\left|p \left(\frac{1-e^{-\alpha (t_2-s)}}{\alpha},0,y \right)-p
\left(\frac{1-e^{-\alpha (t_1-s)}}{\alpha},0,y\right)\right|dy \\
&\ \ +C(H,|\Lambda|) \sum_{j \in \Gamma_N \setminus \tilde \Lambda} ||\p_{ij}P^N_s f||.
\end{split}
\end{equation*}
Therefore, by Lemma \ref{l:TriBar2} and the fact $s \leq L$,
choose some sufficiently large $\tilde \Lambda$ (independent of $N$),
the second term on the r.h.s. of the above inequality can be uniformly
bounded by $\frac\e{12L}$.
Moreover, since $s \leq L$, there exists some
constant $T=T(H,L,\eta,|\Lambda|,|\tilde \Lambda|,d)>0$ so that as $t_1,t_2 \geq T$ the
first term is also bounded by
$\frac\e{12L}$. \\

Combining the above estimates with \eqref{e:J3Aux}, we have some constant $T>0$,
which is independent of $N$, so that
\begin{equation}
\left|(S_{t_2-s}-S_{t_1-s}) [\ \sum \limits_{i \in \Lambda} \sum_{j \in \Gamma_N \setminus i} a_{ij} x_j
\partial_i P^N_s f\ ](0)\right| \leq \frac{\e}{4L}
\end{equation}
By the same arguments, one has
\begin{equation}
\left|(S_{t_2-s}-S_{t_1-s}) [\ \sum \limits_{i \in \Lambda} \sum_{j \in \Gamma_N \setminus i} \p_j U_i^N
\partial_i P^N_s f\ ](0)\right| \leq  \frac{\e}{4L}.
\end{equation}
Combining the above two inequalities, we immediately conclude this step.
\end{proof}
%%%%%%%%%%%%%%%%%%%%%%%%%%%%%%%

\begin{proof} [{\bf Proof of Theorem \ref{t:ErgThm}}]
Denote $\ell (f)=\lim_{t \rightarrow \infty} P_tf(0)$, which is the limit in Proposition
\ref{p:ErgAt0}. We shall split the proof of the theorem into the following two steps.  \\

\noindent \emph{\underline{Step 1: $\lim_{t \rightarrow \infty} P_t f(x)=\ell (f)
\ \ \forall \ x \in \mathbb B$}}.
It suffices to show that the limit
is true on one ball $B_{R, \rho}$. By triangle inequality,
\begin{equation} \label{e:05}
\begin{split}
|P_tf(x)-\ell(f)| &\leq |P_tf(x)-P^N_tf(x)|+|P^N_tf(x)-P^N_tf(0)|
\\
&\ \ \ +|P^N_tf(0)-P_tf(0)|+|P_tf(0)-\ell(f)|
\end{split}
\end{equation}
For arbitrary $\varepsilon>0$, by Proposition \ref{p:ErgAt0}, there exists some $T_0>0$
such that, as $t>T_0$,
\begin{equation} \label{e:limit at 0}
|P_tf(0)-\ell(f)|<\varepsilon.
\end{equation}
By Theorem \ref{t:ConDyn}, there exists some $N(t) \in {\mathbb N}$ such
that as $N>N(t)$
\begin{equation} \label{e:N approximation at point}
|P_tf(x)-P^N_tf(x)|<\varepsilon,\ |P^N_tf(0)-P_tf(0)|<\varepsilon.
\end{equation}
We remain to show that there exists some constant $T_1>0$, which is independent of
$N$, so that as $t \geq T_1$,
\begin{equation} \label{e:PNtfx-PNtf0}
|P^N_tf(x)-P^N_tf(0)| \leq \e.
\end{equation}
\ \\
\indent By fundamental calculus, one has
\begin{equation*} \label{e:expfinite4}
\begin{split}
& \ \ \left|P^N_tf(x)-P^N_tf(0)\right|
=\left|\int_0^1 \frac{d}{d\lambda} P^N_t f(\lambda x) d\lambda \right| \\
& \leq \int_0^1 \sum_{i \in \Gamma_N} |\p_i P^N_t f(\lambda x)| |x_i|  d\lambda
 \leq \sum_{i \in \Gamma_N} ||\p_i P^N_t f|| |x_i|
\end{split}
\end{equation*}
For any constant $A>0$, choose $B \geq 1$ as in Lemma \ref{l:FinSpePro}, and also a
cube $\Lambda(f) \subset \Lambda \subset \Gamma_N$ such that
$dist(\Gamma_N \setminus \Lambda, \Lambda(f))=[BtK]+1$ (up to some $O(1)$ correction),
therefore, by \eqref{e:GB2} and \eqref{e:expfinite},
\begin{equation*} %\label{e:expfinite4}
\begin{split}
\sum_{i \in \Gamma_N} ||\p_i P^N_t f|| |x_i|
& \leq \sup_{i \in \Lambda} |x_i| |\Lambda| e^{-\beta t} |||f|||_1 \\
& \ \ \ +
\sum_{i \in \Gamma_N \setminus \Lambda} e^{-An_i-At} |||f|||_1 R(1+|i|)^\rho \\
& \leq e^{-\beta t} |||f|||_1 R(|\Lambda(f)|+BKt+1)^{\rho+d}+C(A,R) e^{-At} |||f|||_1
\end{split}
\end{equation*}
where $n_i$ is defined in Lemma \ref{l:FinSpePro}, since
$\sum_{i \in \Gamma_N \setminus \Lambda} e^{-A n_i}R(1+|i|)^\rho<\infty$
uniformly on $N$, thus there exists some constant $T_1=T_1(A,K,R,\rho,\Lambda,f)$ so that
as $t \geq T_1$,
$$\sum_{i \in \Gamma_N} ||\p_i P^N_t f|| |x_i| \leq \e.$$

Take $T=\max\{T_0,T_1\}$, as $t \geq T$, we immediately have
$$|P_t f(x)-\ell (f)| \leq 4 \e.$$
\ \\
\noindent \emph{\underline{Step 2: $P_t$ is strongly mixing.}}
From the definition of $\ell(f)$, it is clear that
$\ell(\cdot)$ is a linear functional on $\D^\infty$. Since $\mathbb B$
is locally compact, by Riesz representation theorem
for linear functional (see a nice introduction in \cite{Tao09}), there exists
some unsigned Randon measure $\mu$ on $\mathbb B$. From the easy fact
$P_t ({\bf 1})=1$, we have $\mu(\mathbb B)=1$, thus $\mu$ is a probability measure.
On the other hand, by the Riesz representation theorem again, for each fixed $t$,
$P_t f(x)$ also admits a probability measure $P^*_t \delta_x$. By Step 1 and
the fact that $\D^\infty$ is dense in $\mcl B_b(\mathbb B,\R)$ (under product topology),
we immediately have $P^{*}_t \delta_x \rightarrow \mu$ in weak sense, which also implies
that the semigroup $P_t$ is strongly mixing.
\end{proof}
\section{Appendix: Formal derivation of \eqref{e:ProRepU}}
Suppose that the Fourier transforms for the solution $u(t)$
and $f$ exist, then the Fourier transform of the equation \eqref{e:kola} is
\begin{equation}
\begin{cases}
\partial_t \hat{u}=-|\lambda|^{\alpha} \hat{u}+\hat{u}+\lambda \partial_{\lambda}\hat{u} \\
\hat{u}(0)=\hat{f}
\end{cases}
\end{equation}
where  '$\hat{\ \ }$'  denotes the Fourier transform of functions. Suppose $\lambda>0$, set
$\nu=\ln \lambda,\ \ \hat{v}=e^{-t}\hat{u}(e^{\nu}), \ \ \hat{g}(\nu)=\hat{f}(e^{\nu})$, we have
\begin{equation}
\begin{cases}
\partial_t \hat{v}=-e^{\alpha \nu} \hat{v}+\partial_{\nu}\hat{v} \\
\hat{v}(0)=\hat{g}(\nu)
\end{cases}
\end{equation}
Suppose $\hat{g}$ is positive, set
$\hat{w}=ln \hat{v},$
the equation for $\hat{w}$ is
\begin{equation}
\begin{cases}
\partial_t \hat{w}=-e^{\alpha \nu}+\partial_{\nu}\hat{w} \\
\hat{v}(0)=ln \hat{g}(\nu)
\end{cases}
\end{equation}
It is easy to solve the above equation by
$\hat{w}(t)=ln \hat{g}(\nu+t)-e^{\alpha \nu} \frac{e^{\alpha t}-1}{\alpha},$
thus
$$\hat{w}(t)=\hat{g}(\nu+t)\exp\{-e^{\alpha \nu} \frac{e^{\alpha t}-1}{\alpha} \}$$
and
\begin{equation*}
%\begin{split}
\hat{u}(t)=\hat{g}(\nu+t)\exp\{t-e^{\alpha \nu} \frac{e^{\alpha t}-1}{\alpha} \}=\hat{f}(e^t \lambda )\exp\{t-|\lambda|^{\alpha} \frac{e^{\alpha t}-1}{\alpha} \}.
%\end{split}
\end{equation*}
Hence, by Parseval's Theorem, we have
\begin{equation*}
\begin{split}
u(t)&=\frac{1}{\sqrt{2 \pi}} \int \limits_{{\bf R}} \hat{f}(e^t \lambda )\exp\{t-|\lambda|^{\alpha} \frac{e^{\alpha t}-1}{\alpha} \}e^{i\lambda x}d\lambda \\
&=\int \limits_{{\bf R}} \hat{f}(\lambda )\frac{1}{\sqrt{2 \pi}}\exp\{-|\lambda|^{\alpha}
\frac{1-e^{-\alpha t}}{\alpha}+i\lambda e^{-t}x\}d\lambda \\
&=\int \limits_{{\bf R}} p\left(\frac{1-e^{-\alpha
t}}{\alpha};e^{-t}x, y\right)f(y)dy
\end{split}
\end{equation*}
% ------------------------------------------------------------------------
%GATHER{Xbib.bib}   % For Gather Purpose Only
%GATHER{Paper.bbl}  % For Gather Purpose Only
\bibliographystyle{amsplain}
%\begin{thebibliography}{99}

\end{document}